\date{November 23, 2013}
\title{Divisors on Burniat surfaces}
 \author{Valery
  Alexeev} \address{Department of Mathematics, University of Georgia,
  Athens, GA 30605, USA} \email{valery@math.uga.edu} 
\begin{document}

\begin{abstract}
  In this short note, we extend the results of [Alexeev-Orlov, 2012]
  about Picard groups of Burniat surfaces with $K^2=6$ to the cases of
  $2\le K^2\le 5$. We also compute the semigroup of effective divisors
  on Burniat surfaces with $K^2=6$. Finally, we construct an
  exceptional collection on a nonnormal semistable degeneration of a
  1-parameter family of Burniat surfaces with $K^2=6$.
\end{abstract}

\maketitle

\tableofcontents

\section*{Introduction}
\label{sec:intro}

This note strengthens and extends several geometric results of the
paper \cite{alexeev2012derived-categories}, joint with Dmitri Orlov,
in which we constructed exceptional sequences of maximal possible length
on Burniat surfaces with $K^2=6$. The construction was based on certain
results about the Picard group and effective divisors on Burniat
surfaces.

Here, we extend the results about Picard group to Burniat surfaces with
$2\le K^2\le 5$. We also establish a complete description of the
semigroup of effective $\bZ$-divisors on Burniat surfaces with
$K_X^2=6$. (For the construction of exceptional sequences in
\cite{alexeev2012derived-categories} only a small portion of this
description was needed.)

Finally, we construct an exceptional collection on a nonnormal
semistable degeneration of a 1-parameter family of Burniat surfaces
with $K^2=6$.

\section{Definition of Burniat surfaces}

In this paper, Burniat surfaces will be certain smooth surfaces of
general type with $q=p_g=0$ and $2\le K^2\le 6$ with big and nef
canonical class $K$ which were defined by Peters in
\cite{peters1977on-certain-examples} following Burniat.  They are
Galois $\bZ_2^2$-covers of (weak) del Pezzo surfaces with $2\le K^2\le
6$ ramified in certain special configurations of curves.

Recall from \cite{pardini1991abelian-covers} that a $\bZ_2^2$-cover 
$\pi\colon X\to Y$ with smooth and projective $X$ and $Y$ is determined by
three branch divisors $\bar A,\bar B,\bar C$ and three invertible
sheaves $L_1, L_2, L_3$ on the base $Y$ satisfying fundamental
relations $L_2\otimes L_3\simeq L_1(\bar A)$, $L_3\otimes L_1\simeq
L_2(\bar B)$, $L_1\otimes L_2 \simeq L_3(\bar C)$. These relations
imply that $L_1^2 \simeq \cO_Y(\bar B+\bar C)$, $L_2^2 \simeq
\cO_Y(\bar C+\bar A)$, $L_3^2 \simeq \cO_Y(\bar A+\bar B)$.

One has $X=\Spec_Y \cA$, where the $\cO_Y$-algebra $\cA$ is
$\cO_Y\oplus\oplus_{i=1}^3 L_i\inv$. The multiplication is determined
by three sections in
\begin{math}
  \Hom( L_i\inv \otimes L_j\inv, L_k\inv) =
  H^0( L_i\otimes L_j\otimes L_i\inv),
\end{math}
where $\{i,j,k\}$ is a permutation of $\{1,2,3\}$, i.e. by sections of
the sheaves $\cO_Y(\bar A)$, $\cO_Y(\bar B)$, $\cO_Y(\bar C)$
vanishing on $\bar A$, $\bar B$, $\bar C$.

Burniat surfaces with $K^2=6$ are defined by taking $Y$ to be the
del Pezzo surface of degree 6, i.e. the blowup of $\bP^2$ in three
noncollinear points, and the divisors
$\bar A=\sum_{i=0}^3 \bar A_i$, $\bar B=\sum_{i=0}^3 \bar B_i$, $\bar
C=\sum_{i=0}^3 \bar C_i$ to be the ones shown in red, blue, and black
in the central picture of Figure~\ref{fig:Burniat} below. 

The divisors $\bar A_i,\bar B_i,\bar C_i$ for $i=0,3$ are the
$(-1)$-curves, and those for $i=1,2$ are $0$-curves, fibers of
rulings $\Bl_3\bP^2\to \bP^1$.
The del Pezzo surface also has two contractions to $\bP^2$
related by a quadratic transformation, and the images of the divisors
form a special line configuration on either $\bP^2$.
We denote the fibers of the three rulings $f_1,f_2,f_3$ and the
preimages of the hyperplanes from $\bP^2$'s by $h_1,h_2$. 

\begin{figure}[h]
  \centering
  \includegraphics[height=1.5in]{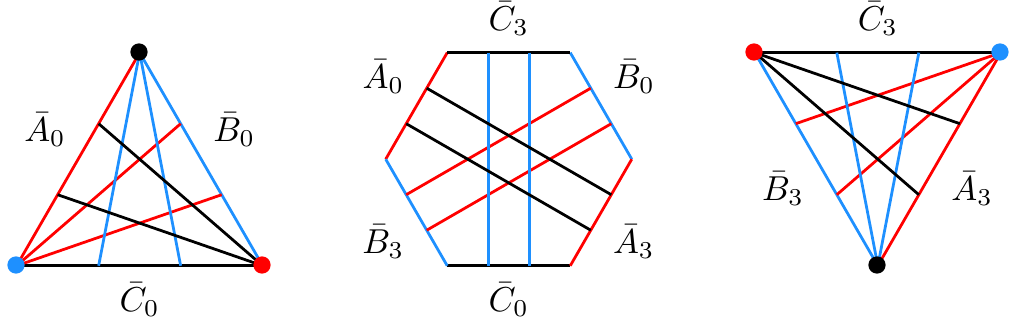}
  \caption{Burniat configuration on $\Bl_3\bP^2$}
  \label{fig:Burniat}
\end{figure}

Burniat surfaces with $K^2=6-k$, $1\le k\le 4$ are obtained by
considering a special configuration in Figure~\ref{fig:Burniat} for
which some $k$ triples of curves, one from each group $\{\bar A_1, \bar
A_2\}$, $\{\bar B_1, \bar B_2\}$, $\{\bar C_1, \bar C_2\}$, meet at
common points $P_s$. The corresponding Burniat surface is the
$\bZ_2^2$-cover of the blowup of $\Bl_3\bP^2$ at these points. 

Up to symmetry, there are the following cases, see
\cite{bauer2011burniat-surfaces1}:
\begin{enumerate}
\item $K^2=5$: $P_1 = \bar A_1\bar B_1\bar C_1$ (our shortcut notation
  for   $\bar A_1\cap \bar B_1\cap \bar C_1$). 
\item $K^2=4$, nodal case: $P_1 = \bar A_1\bar B_1\bar C_1$, 
  $P_2 = \bar A_1\bar B_2\bar C_2$. 
\item $K^2=4$, non-nodal case: $P_1 = \bar A_1\bar B_1\bar C_1$, 
  $P_2 = \bar A_2\bar B_2\bar C_2$. 
\item $K^2=3$: $P_1 = \bar A_1\bar B_1\bar C_2$, 
  $P_2 = \bar A_1\bar B_2\bar C_1$, $P_3=\bar A_2\bar B_1\bar C_1$. 
\item $K^2=2$: $P_1 = \bar A_1\bar B_1\bar C_1$, 
  $P_2 = \bar A_1\bar B_2\bar C_2$, $P_3=\bar A_2\bar B_1\bar C_2$, 
  $P_4 = \bar A_2\bar B_2\bar C_1$. 
\end{enumerate}

\begin{notation}
  We generally denote the divisors upstairs by $D$ and the divisors
  downstairs by $\bar D$ for the reasons which will become clear from
  Lemmas~\ref{lem:lattices}, \ref{lem:lattices2-5}. We denote
  $Y=\Bl_3\bP^2$ and $\eps\colon Y'\to Y$ is the blowup map at the
  points $P_s$. The exceptional divisors are denoted by $\bar E_s$.

  The curves $\bar A_i,\bar B_i,\bar C_i$ are the curves on $Y$, the
  curves $\bar A'_i,\bar B'_i,\bar C'_i$ are their strict preimages
  under $\eps$. (So that $\eps^*(\bar A_1)= \bar A'_1+E_1$ in the case
  (1), etc.)  The divisors $A'_i,B'_i,C'_i,E_s$ are the curves (with
  reduced structure) which are the preimages of the latter curves and
  $\bar E_s$ under $\pi'\colon X'\to Y'$. The surface $X'$ is the
  Burniat surface with $K^2=6-k$.
\end{notation}

The building data for the $\bZ_2^2$-cover $\pi'\colon X'\to Y'$
consists of three divisors $A'=\sum \bar A'_i$, $B'=\sum \bar B'_i$,
$C'=\sum \bar C'_i$. It does \emph{not} include the exceptional
divisors $\bar E_s$, they are not in the ramification locus.

One has $\pi'^* (\bar A'_i) = 2A'_i$, $\pi'^* (\bar B'_i) = 2B'_i$, $\pi'^*
(\bar C'_i) = 2C'_i$,  and $\pi'^* (\bar E_s) = E_s$. 

\smallskip

For the canonical class, one has $2K_X = \pi^*(-K_Y)$.  Indeed, from
Hurwitz formula $2K_{X'}=\pi^*(2K_{Y'}+R')$, where
$R'=A'+B'+C'$. Therefore, the above identity is equivalent to
$R'=-3K_{Y'}$. This holds on $Y=\Bl_3\bP^2$, and
  \begin{displaymath}
    R'= \eps^*R -3\sum \bar E_s = \eps^*(-3K_{Y})-3\sum \bar E_s = -3 K_{Y'}.
  \end{displaymath}

  For the surfaces with $K^2=6,5$ and $4$ (non-nodal case), $-K_Y$ and
  $K_X$ are ample. For the remaining cases, including $K^2=2,3$, the
  divisors $-K_Y$ and $K_X$ are big, nef, but not ample.  Each of the
  curves $\bar L_j$ (among $\bar A_i,\bar B_i,\bar C_i$) through two
  of the points $P_s$ is a $(-2)$-curve (a $\bP^1$ with square $-2$)
  on the surface $Y$. (For example, for the nodal case with $K^2=4$
  $\bar L_1=\bar A_1$ is such a line). Its preimage, a curve $L_j$ on $X$,
  is also a $(-2)$-curve. One has $-K_Y\bar L_j = K_XL_j=0$, and the
  curve $L_j$ is contracted to a node on the canonical model of $X$.

Note that both of the cases with $K^2=2$ and $3$ are nodal.

\section{Picard group of Burniat surfaces with  $K^2=6$}

In this section, we recall two results of
\cite{alexeev2012derived-categories}.

\begin{lemma}[\cite{alexeev2012derived-categories}, Lemma 1]
\label{lem:lattices}
The homomorphism $\bar D\mapsto \frac12\pi^*(\bar D)$ defines an
  isomorphism of integral lattices $\frac12\pi^*\colon\Pic Y \to \Pic
  X/\Tors$. Under this isomorphism, one has $\frac12\pi^*(-K_Y) = K_X$.
\end{lemma}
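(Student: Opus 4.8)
The plan is to show that $\frac12\pi^*$ is a well-defined isometric embedding of lattices and then to force it to be onto by a rank-and-discriminant count. For the \emph{well-definedness}, I would first verify that $\pi^*\bar D\in 2\Pic X$ for every $\bar D\in\Pic Y$. Since $\pi$ is ramified with index $2$ along each component of the branch divisor, one has $\pi^*\bar A_i=2A_i$, $\pi^*\bar B_i=2B_i$, $\pi^*\bar C_i=2C_i$, with $A_i,B_i,C_i$ the reduced preimages. The six curves $\bar A_0,\bar A_3,\bar B_0,\bar B_3,\bar C_0,\bar C_3$ are exactly the six $(-1)$-curves of $Y=\Bl_3\bP^2$, and these already generate $\Pic Y\cong\bZ^4$; applying the homomorphism $\pi^*$ to any integral combination of them shows $\pi^*\bar D\in 2\Pic X$ in general. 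In the torsion-free group $\Pic X/\Tors$ division by $2$ is unique, so $\frac12\pi^*\colon\Pic Y\to\Pic X/\Tors$ is a well-defined homomorphism.

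For the \emph{isometry and injectivity}: because $\pi$ has degree $4$, the projection formula gives $\pi^*\bar D_1\cdot\pi^*\bar D_2=4(\bar D_1\cdot\bar D_2)$, hence $\frac12\pi^*\bar D_1\cdot\frac12\pi^*\bar D_2=\bar D_1\cdot\bar D_2$, so $\frac12\pi^*$ preserves intersection forms. Injectivity is then automatic: the form on $\Pic Y$ is nondegenerate, so a class in the kernel would pair to zero with everything and must vanish.

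The \emph{surjectivity} step I expect to be the main obstacle. Since $X$ satisfies $p_g=q=0$ and $K_X^2=6$, Noether's formula yields $e(X)=12\chi(\cO_X)-K_X^2=6$, so $b_2(X)=4$; because $p_g=q=0$ one has $\Pic X/\Tors=\mathrm{NS}(X)/\Tors=H^2(X,\bZ)/\Tors$, which is a unimodular lattice of rank $4$ and signature $(1,3)$ by Poincar\'e duality and the Hodge index theorem. On the other side $\Pic Y$ is unimodular of rank $4$ and signature $(1,3)$ as well, with basis $h,e_1,e_2,e_3$ and form $\mathrm{diag}(1,-1,-1,-1)$. An isometric embedding of a rank-$4$ unimodular lattice into another of the same rank has index $1$, since $[\,\Pic X/\Tors : \operatorname{im}\frac12\pi^*\,]^2$ equals the ratio of discriminants, both $\pm1$; therefore $\frac12\pi^*$ is an isomorphism.

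Finally, the identity $\frac12\pi^*(-K_Y)=K_X$ is immediate from the relation $2K_X=\pi^*(-K_Y)$ recorded above. Apart from the surjectivity count --- and in particular pinning down that $\Pic X/\Tors$ is genuinely unimodular of rank $4$ --- every step is routine bookkeeping with the projection formula and the explicit curve configuration.
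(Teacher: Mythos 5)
Your proof is correct and follows essentially the same route as the paper's: integrality of $\frac12\pi^*$ from the fact that the branch curves generate $\Pic Y$, isometry from the projection formula, rank equality from Noether's formula, and a unimodularity argument forcing the index of the image in $\Pic X/\Tors$ to be $1$. The only difference is cosmetic: you invoke Poincar\'e duality (via $\Pic X\simeq H^2(X,\bZ)$, using $p_g=q=0$) to make $\Pic X/\Tors$ unimodular, whereas the paper gets by with less --- a finite-index sublattice that is unimodular must already equal the ambient lattice whenever the ambient intersection form is integral, so unimodularity of $\Pic Y$ alone suffices.
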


This lemma allows one to identify $\bZ$-divisors $\bar D$ on the
del Pezzo surface $Y$ with classes of $\bZ$-divisors $D$ on $X$ up
to torsion, equivalently up to numerical equivalence. This
identification preserves the intersection form.

\medskip

The curves $A_0,B_0,C_0$ are elliptic curves (and so are the curves
$A_3\simeq A_0$, etc.). Moreover, each of them comes with a canonical
choice of an origin, denoted $P_{00}$, which is the point of
intersection with the other curves which has a distinct color,
different from the other three points. (For example, for $A_0$ one has
$P_{00}= A_0\cap B_3$.)

On the elliptic curve $A_0$ one also defines $P_{10}=A_0\cap C_3$,
$P_{01}=A_0\cap C_1$, $P_{11}=A_0\cap C_2$. This gives the 4 points in
the 2-torsion group $A_0[2]$. We do the same for $B_0$, $C_0$
cyclically. 

\begin{theorem}\label{thm:PicX}
  [\cite{alexeev2012derived-categories}, Theorem 1]
  One has the following:
  \begin{enumerate}
  \item The homomorphism
    \begin{eqnarray*}
      \phi\colon
      \Pic X &\to& \bZ \times \Pic A_0 \times \Pic B_0 \times \Pic C_0 \\
      L&\mapsto& (d(L)=L\cdot K_X,\  L|_{A_0},\  L|_{B_0},\  L|_{C_0}
      )
    \end{eqnarray*}
    is injective, and the image is the subgroup of index 3 of
    \begin{displaymath}
      \bZ\times
      (\bZ.P_{00} + A_0[2])
      \times (\bZ.P_{00} + B_0[2])
      \times (\bZ.P_{00} + C_0[2])
      \simeq \bZ^4 \times \bZ_2^6.
    \end{displaymath}
    consisting of the elements with $d+a_0^0+b_0^0+c_0^0$ divisible by
    3. Here, we denote an element of the group $\bZ.P_{00} + A_0[2]$ by 
    $(a_0^0\ a_0^1a_0^2)$, etc. 

  \item $\phi$ induces an isomorphism $\Tors(\Pic X)\to A_0[2]\times
    B_0[2]\times C_0[2]$.
  \item The curves $A_i,B_i,C_i$, $0\le i\le 3$, generate $\Pic X$.
  \end{enumerate}
\end{theorem}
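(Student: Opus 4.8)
The plan is to separate the free and torsion parts of $\Pic X$, handling the former with Lemma~\ref{lem:lattices}. Write $\phi=(d,r_A,r_B,r_C)$ with $d(L)=L\cdot K_X$ and $r_A(L)=L|_{A_0}$, etc. By Lemma~\ref{lem:lattices} every $L$ equals $\frac12\pi^*\bar D$ modulo torsion for a unique $\bar D\in\Pic Y$, and since $\bar A_0$ lies in the branch divisor, $\pi^*\bar A_0=2A_0$, i.e. $A_0=\frac12\pi^*\bar A_0$. As $\frac12\pi^*$ is an isometry carrying $-K_Y$ to $K_X$, the \emph{degree} coordinates of $\phi$ are intersection numbers on $Y$:
\begin{displaymath}
  d=\bar D\cdot(-K_Y),\quad a_0^0=\bar D\cdot\bar A_0,\quad b_0^0=\bar D\cdot\bar B_0,\quad c_0^0=\bar D\cdot\bar C_0.
\end{displaymath}
I would first analyze these four functionals, then the $2$-torsion refinement of the restrictions.

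Since $\bar A_0,\bar B_0,\bar C_0$ are the three exceptional $(-1)$-curves of one contraction $Y\to\bP^2$, one has $-K_Y+\bar A_0+\bar B_0+\bar C_0=3h$ with $h$ the pullback hyperplane class; in particular this class is $3$-divisible, so $d+a_0^0+b_0^0+c_0^0=3(\bar D\cdot h)\equiv 0\pmod 3$. Because $\{h,\bar A_0,\bar B_0,\bar C_0\}$ spans $\Pic Y$ up to this one index-$3$ relation and the form is unimodular, the four functionals are jointly injective on $\Pic X/\Tors$ and their image is exactly the index-$3$ subgroup $\{d+a_0^0+b_0^0+c_0^0\equiv 0\}$ of $\bZ^4$. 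This gives the free part of statement (1).

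The heart is (2). I would exhibit $2$-torsion classes from linear equivalences on $Y$ that are killed by $\pi^*$. The branch divisor $\bar A$ consists of three fibers of one ruling, the reducible one being $\bar A_0+\bar A_3$, so $\bar A_1\sim\bar A_2\sim\bar A_0+\bar A_3$; applying $\pi^*$ (with $\pi^*\bar A_i=2A_i$) yields the $2$-torsion classes $A_1-A_2$ and $A_1-A_0-A_3$, and cyclically for $B,C$ — six classes, each a combination of the curves $A_i,B_i,C_i$. Restricting them to the elliptic curves via $B_3|_{A_0}=P_{00}$, $C_3|_{A_0}=P_{10}$, $C_1|_{A_0}=P_{01}$, $C_2|_{A_0}=P_{11}$ and the cyclic analogues lands each in $A_0[2]\times B_0[2]\times C_0[2]$, and I expect onto an $\mathbb{F}_2$-basis. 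Independently, $\Tors\Pic X\cong H_1(X,\bZ)_{\mathrm{tors}}\cong\bZ_2^6$ by the classical $\pi_1$-computation for Burniat surfaces \cite{peters1977on-certain-examples} (or Pardini's formula for $\bZ_2^2$-covers \cite{pardini1991abelian-covers}); since source and target both have order $2^6$, injectivity of the restriction forces the isomorphism (2).

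With (2) established, $\phi$ is injective: the free part is injective by the second paragraph, and any $L\in\ker\phi$ is then torsion with vanishing restrictions, hence $0$. The image in (1) is the product of the index-$3$ free image with the full torsion $\bZ_2^6$, matching index $3$. For (3), the classes $A_i=\frac12\pi^*\bar A_i$ (and $B_i,C_i$) generate $\Pic X/\Tors\cong\Pic Y$ because the $(-1)$-curves and fibers among $\bar A_i,\bar B_i,\bar C_i$ already generate $\Pic Y$, while the six torsion generators above are differences of the same curves; together they generate $\Pic X$. The main obstacle is (2): fixing exactly six independent $2$-torsion classes and checking that their restrictions span $A_0[2]\times B_0[2]\times C_0[2]$ requires the precise incidences of the Burniat configuration, and this bookkeeping — rather than any conceptual difficulty — is where the real work lies.
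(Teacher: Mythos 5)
Your overall strategy---handle the free part via Lemma~\ref{lem:lattices} and the unimodular basis $\{h,\bar A_0,\bar B_0,\bar C_0\}$ of $\Pic Y$, then exhibit explicit $2$-torsion classes as $\bZ$-combinations of the curves $A_i,B_i,C_i$ and match orders against the known $\Tors\Pic X\simeq\bZ_2^6$---is the right one, and your free-part analysis (including the congruence $d+a_0^0+b_0^0+c_0^0\equiv 0\pmod 3$ coming from $-K_Y+\bar A_0+\bar B_0+\bar C_0=3h$) is correct. But the heart of your argument, part (2), rests on a false linear equivalence. In the Burniat configuration the four curves of one color do \emph{not} all lie in one ruling: $\bar A_1,\bar A_2$ are fibers of the ruling whose two reducible fibers are $\bar A_0+\bar C_3$ and $\bar A_3+\bar C_0$, i.e.\ the $(-1)$-curve partners carry a \emph{different} color. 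Concretely, writing $\bar A_0=e_1$, $\bar B_0=e_2$, $\bar C_0=e_3$ for the exceptional curves of one contraction $Y\to\bP^2$, the incidences recorded in this paper ($P_{01}=B_0\cap A_1$, etc.) force $\bar A_1\sim\bar A_2\sim h-e_2$, $\bar A_3=h-e_2-e_3$, $\bar C_3=h-e_1-e_2$, so that $\bar A_1\sim\bar A_0+\bar C_3\sim\bar A_3+\bar C_0$, whereas $\bar A_1-\bar A_0-\bar A_3=e_3-e_1\ne 0$. Hence your classes $A_1-A_0-A_3$, $B_1-B_0-B_3$, $C_1-C_0-C_3$ are not torsion at all; only $A_1-A_2$, $B_1-B_2$, $C_1-C_2$ survive from your list, and their restrictions span only a $3$-dimensional subspace of $A_0[2]\times B_0[2]\times C_0[2]$, so the order-counting step that was supposed to force (2) collapses. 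The correct torsion generators are $A_1-A_2$, $A_1-A_3-C_0$, $A_2-A_3-C_0$ and their cyclic analogues---exactly the divisors used in this paper's proof of Lemma~\ref{thm:torsion2-5}; with those, restriction does hit a $\bZ_2$-basis and the rest of your argument goes through.

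A second, smaller gap: the image statement in (1) requires showing that $L|_{A_0}$ lies in the subgroup $\bZ.P_{00}+A_0[2]$ of $\Pic A_0$ for \emph{every} $L\in\Pic X$, not merely that the degree coordinates satisfy the congruence; you never address this containment. It follows once generation (3) is known, by checking generators: $B_3|_{A_0}=P_{00}$, $C_3|_{A_0}=P_{10}$, $C_1|_{A_0}=P_{01}$, $C_2|_{A_0}=P_{11}$, the curves disjoint from $A_0$ restrict to $0$, and for $A_0|_{A_0}$ one uses that $2A_0=\pi^*\bar A_0$ restricts to the pullback of $\cO_{\bP^1}(-1)$ under the double cover $A_0\to\bar A_0$ branched at the four points $P_{ij}$, giving $2(A_0|_{A_0})\sim-2P_{00}$, i.e.\ $A_0|_{A_0}\in -P_{00}+A_0[2]$. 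Since your write-up derives (3) only at the very end, this dependence needs to be made explicit; unlike the first issue, though, this is a routine repair.
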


This theorem provides one with explicit coordinates for the Picard
group of a Burniat surface $X$, convenient for making computations.

\section{Picard group of Burniat surfaces with  $2\le K^2\le 5$}

In this section, we extend the results of the previous section to the
cases $2\le K^2\le 5$. 
First, we show that Lemma~\ref{lem:lattices}
holds verbatim if $3\le K^2 \le 5$. 

\begin{lemma}\label{lem:lattices2-5}
  Assume $3\le K^2\le 5$. Then
  the homomorphism $\bar D\mapsto \frac12\pi'^*(\bar D)$ defines an
  isomorphism of integral lattices $\frac12\pi'^*\colon\Pic Y' \to \Pic
  X'/\Tors$, and the inverse map is $\frac12\pi'_*$. 
  Under this isomorphism, one has $\frac12\pi'^*(-K_{Y'}) = K_{X'}$.
\end{lemma}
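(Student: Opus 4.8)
The plan is to reduce the statement to a lattice-theoretic assertion and then isolate the one substantive point. First I would record that both sides are unimodular lattices of the same rank. Writing $k=6-K^2\in\{1,2,3\}$, we have $Y'=\Bl_{3+k}\bP^2$, so $\Pic Y'$ is the odd unimodular lattice of signature $(1,3+k)$; and since $q=p_g=0$ forces $H^2(X',\bZ)=H^{1,1}$ with every class algebraic, $\Pic X'/\Tors=H^2(X',\bZ)/\Tors$ is unimodular of rank $b_2=\chi_{\mathrm{top}}-2=12\chi(\cO_{X'})-K_{X'}^2-2=4+k$ by Noether's formula, of signature $(1,3+k)$ by the Hodge index theorem. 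Next, the projection formula and $\deg\pi'=4$ give $\pi'_*\pi'^*=4$, so $\tfrac12\pi'^*$ preserves the intersection form, $(\tfrac12\pi'^*\bar D)\cdot(\tfrac12\pi'^*\bar D')=\tfrac14\cdot4\,\bar D\cdot\bar D'=\bar D\cdot\bar D'$, and $\tfrac12\pi'_*$ is a one-sided inverse via $\pi'_*\pi'^*=4$. Granting that $\tfrac12\pi'^*$ lands in $\Pic X'/\Tors$, it is then an isometric embedding of unimodular lattices of equal finite rank, hence automatically surjective (the image is a finite-index sublattice of discriminant $\pm1$), i.e. an isomorphism, with $\tfrac12\pi'_*$ the genuine inverse. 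The identity $\tfrac12\pi'^*(-K_{Y'})=K_{X'}$ is immediate from $R'=-3K_{Y'}$ and Hurwitz as in Section~1, since these give $2K_{X'}=\pi'^*(2K_{Y'}+R')=\pi'^*(-K_{Y'})$.

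Everything therefore comes down to \emph{integrality}: that $\tfrac12\pi'^*\bar D\in\Pic X'/\Tors$ for all $\bar D$. Since $\bar A_i,\bar B_i,\bar C_i$ generate $\Pic Y$ and, together with the $\bar E_s$, the strict transforms generate $\Pic Y'$, it suffices to check this on $\bar A'_i,\bar B'_i,\bar C'_i$ and $\bar E_s$. For the branch components it is free: $\pi'^*\bar A'_i=2A'_i$ gives $\tfrac12\pi'^*\bar A'_i=A'_i\in\Pic X'$, and likewise for $B,C$. The one genuinely nontrivial point, and the main obstacle, is the exceptional class $\bar E_s$: here $\pi'^*\bar E_s=E_s$ is \emph{reduced}, so I must show that $E_s$ is divisible by $2$ in $\Pic X'/\Tors$.

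To prove this I would exploit unimodularity in the reverse direction: in a unimodular lattice a class $v$ lies in $2L$ if and only if $v\cdot w\in2\bZ$ for every $w\in L$. The projection formula makes the intersections of $E_s$ transparent: $E_s\cdot A'_i=2\,\bar E_s\cdot\bar A'_i$ and similarly for $B'_j,C'_l$, while $E_s\cdot E_{s'}=4\,\bar E_s\cdot\bar E_{s'}$; all of these are even. Thus it remains to know that the curves $A'_i,B'_i,C'_i,E_s$ actually \emph{generate} $\Pic X'/\Tors$, for then $E_s$ meets every generator evenly and is forced to be $2$-divisible. This generation statement is the analogue of Theorem~\ref{thm:PicX}(3), and it is where the geometry over $P_s$ enters: $X'$ is the minimal resolution of the singular $\bZ_2^2$-cover $\widetilde X\to Y$ of the \emph{same} del Pezzo $Y=\Bl_3\bP^2$ with the specialized, concurrent branch configuration, and each $E_s$ is the exceptional $(-4)$-curve resolving a $\tfrac14(1,1)$ cyclic quotient singularity of $\widetilde X$ over $P_s$. (Indeed $E_s\cong\bP^1$ is the connected $\bZ_2^2$-cover of $\bar E_s$ branched at the three points $\bar A'\cap\bar E_s,\ \bar B'\cap\bar E_s,\ \bar C'\cap\bar E_s$, and $E_s^2=4\bar E_s^2=-4$.) Since $\operatorname{Cl}(\widetilde X)/\Tors$ is the quotient of $\Pic X'/\Tors$ by the $E_s$, generation of $\Pic X'/\Tors$ by $A'_i,B'_i,C'_i,E_s$ follows once $\operatorname{Cl}(\widetilde X)/\Tors$ is generated by the images $\hat A_i,\hat B_i,\hat C_i$, which I would obtain by transporting the $K^2=6$ argument to the specialized configuration over the unchanged base $Y$.

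The expected main difficulty is exactly this last transfer: establishing the generation statement for the non-generic configuration and controlling $\operatorname{Cl}(\widetilde X)$ at the $\tfrac14(1,1)$ points, where the naive ``half'' of $E_s$ only exists locally (as a fiber class of the resolution) and need not extend. Once generation is in hand, the evenness of $E_s$, and with it the integrality and hence the whole lemma, follows formally from unimodularity and the even-intersection computation above.
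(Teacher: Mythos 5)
Your reductions are sound as far as they go: the rank and unimodularity bookkeeping, the fact that $\frac12\pi'^*$ is an isometry with one-sided inverse $\frac12\pi'_*$, the identification of \emph{integrality} as the only substantive issue, and the further reduction of integrality to the $2$-divisibility of $E_s$ in $\Pic X'/\Tors$ are all correct (as is your criterion that in a unimodular lattice $v\in 2L$ iff $v\cdot w$ is even for all $w\in L$). But there is a genuine gap exactly where you place ``the expected main difficulty'': your proof of $2$-divisibility rests on the claim that $A'_i,B'_i,C'_i,E_s$ generate $\Pic X'/\Tors$, and this claim is never proved --- you only sketch a strategy through $\operatorname{Cl}(\widetilde X)$ of the singular cover and acknowledge that the transfer of the $K^2=6$ argument to the specialized configuration is open. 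Moreover, this input is not a routine verification: in the paper's logical order the generation statement is part (3) of Theorem~\ref{thm:PicX2-5}, which is \emph{deduced from} Lemma~\ref{lem:lattices2-5}, so your route is circular unless you supply an independent proof. That no soft argument can produce it is shown by the case $K^2=2$, where all the formal ingredients of your setup are still present but the conclusion is not known to hold (see the paper's remark following the lemma): generation genuinely depends on the combinatorics of the points $P_s$.

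What you are missing is that integrality can be checked entirely on $Y'$, with no knowledge of $\Pic X'$ at all. For $3\le K^2\le 5$, each point $P_s$ lies on some line $\bar L\in\{\bar A_i,\bar B_i,\bar C_i\}$ passing through $P_s$ and through \emph{no other} $P_{s'}$ (e.g.\ $\bar C_2$ for $P_1$, $\bar B_2$ for $P_2$, $\bar A_2$ for $P_3$ when $K^2=3$); hence $\eps^*(\bar L)=\bar L'+\bar E_s$, i.e.\ $\bar E_s=\eps^*(\bar L)-\bar L'$. Since the six $(-1)$-curves $\bar A_0,\bar B_0,\bar C_0,\bar A_3,\bar B_3,\bar C_3$ generate $\Pic Y$ and avoid all the $P_s$, the group $\eps^*(\Pic Y)$ is generated by their strict transforms, which are branch divisors. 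Therefore $\Pic Y'$ is generated by classes of branch divisors $\bar A'_i,\bar B'_i,\bar C'_i$ alone, and $\frac12\pi'^*$ of each such class is the integral class of the reduced preimage curve. Integrality follows in one line, and then either your unimodularity-of-equal-rank argument or the paper's discriminant argument (a finite-index overlattice of a unimodular lattice equals it) finishes the proof. Note that this is also exactly where the hypothesis $3\le K^2\le 5$ enters: for $K^2=2$ every interior line passes through two of the points $P_s$, only the sums $\bar E_s+\bar E_{s'}$ are reachable from branch divisors, and the argument breaks down --- consistent with your inability to make the generation claim unconditional.
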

\begin{proof}
  The proof is similar to that of Lemma~\ref{lem:lattices}. The map
  $\frac12\pi^*$ establishes an 
  isomorphism of $\bQ$-vector spaces $(\Pic Y')\otimes\bQ$ and $(\Pic
  X')\otimes\bQ$ together with the intersection product because:
  \begin{enumerate}
  \item Since $h^i(\cO_{X'})=h^i(\cO_{Y'})=0$ for $i=1,2$ and $K_{X'}^2=K_{Y'}^2$,
    by Noether's formula the two vector spaces have the same dimension.
  \item $\frac12\pi'^*\bar D_1\cdot\frac12\pi'^*\bar D_2 =
    \frac14\pi'^*(\bar D_1\cdot \bar D_2)=\bar D_1\bar D_2$.
  \end{enumerate}
  A crucial observation is that $\frac12\pi'^*$ sends $\Pic Y'$ to
  integral classes. To see this, it is sufficient to observe that $\Pic
  Y'$ is generated by divisors $\bar D$ which are in the ramification
  locus and thus for which $D=\frac12\pi'^*(\bar D)$ is integral. 

  Consider for example the case of $K^2=5$.  One has $\Pic
  Y'=\eps^*(\Pic Y)\oplus \bZ E$.  The group $\eps^*(\Pic Y)$ is
  generated by $\bar A'_0,\bar B'_0,\bar C'_0,\bar A'_3,\bar B'_3,\bar
  C'_3$. Since $\eps^*(\bar A_1) = \bar A_1' + \bar E_1$, the divisor
  class $\bar E_1$ lies in group spanned by $\bar A_1'$ and
  $\eps^*(\Pic Y)$. So we are done. 

  In the nodal case $K^2=4$, $\bar E_1$ is spanned by $\bar B_1'$ and
  $\eps^*(\Pic Y)$, $\bar E_2$ by $\bar B_2'$ and
  $\eps^*(\Pic Y)$; exactly the same for the non-nodal case. 
  In the case $K^2=3$, 
  $\bar E_1$ is spanned by $\bar C_2'$ and $\eps^*(\Pic Y)$, 
  $\bar E_2$ by $\bar B_2'$ and $\eps^*(\Pic Y)$, 
  $\bar E_3$ by $\bar A_2'$ and $\eps^*(\Pic Y)$.

  Therefore, $\frac12\pi'^*(\Pic Y')$ is a sublattice of finite index in
  $\Pic X'/\Tors$. Since the former lattice is unimodular, they must be
  equal. 

  One has $\frac12\pi'_* \circ \frac12\pi'^*(\bar D)=\bar D$, so the inverse
  map is $\frac12\pi'_*$. 
\end{proof}

\begin{remark}
  I thank Stephen Coughlan for pointing out that the above proof that
  $\Pic Y'$ is generated by the divisors in the ramification locus
  does not work in the $K^2=2$ case. In this case, each of the 
  lines $\bar A_i, \bar B_i, \bar C_i$, $i=1,2$ contains exactly two
  of the points $P_1,P_2,P_3$. What we can see easily is the
  following: there exists a free abelian group $H\simeq \bZ^8$ which
  can be identified with a subgroup of index 2 in $\Pic Y'$ and a
  subgroup of index 2 in $\Pic X'/\Tors$. 
\end{remark}

Consider a $\bZ$-divisor (not a divisor class) on $Y'$
$$\bar D=a_0\bar A'_0+\dotsc+c_3\bar C'_3+\sum_s e_s\bar E_s$$ such that the
coefficients $e_s$ of $\bar E_s$ are even. Then we can define a canonical
lift 
$$D=a_0A_0+\dotsc+c_3 C_3+ \sum_s \frac12 e_s E_s,$$ 
which is a divisor on $X'$, and numerically one has $D=\frac12\pi'^*(\bar D)$. 
Note that $\bar D$ is linearly equivalent to 0 iff $D$ is a torsion.
 
\medskip

By Theorem~\ref{thm:PicX}, for a Burniat surface with $K^2=6$, we have
an identification 
\begin{displaymath}
  V:= \Tors\Pic X = A_0[2] \times B_0[2] \times C_0[2] 
  = \bZ_2^2 \times\bZ_2^2 \times\bZ_2^2. 
\end{displaymath}

It is known (see \cite{bauer2011burniat-surfaces1}) that for Burniat
surfaces with $2\le K^2\le 6$ one has $\Tors\Pic X\simeq \bZ_2^{K^2}$
with the exception of the case $K^2=2$ where $\Tors\Pic X\simeq
\bZ_2^{3}$.  We would like to establish a convenient presentation for
the Picard group and its torsion for these cases which would be
similar to the above. 

\begin{definition}
  We define the following vectors, forming a basis in the
  $\bZ_2$-vector space $V$:
  $\vec A_1 = 00\ 10\ 00$, $\vec A_2 = 00\ 11\ 00$, $\vec B_1 = 00\
  00\ 10$, $\vec B_2 = 00\ 00\ 11$, $\vec C_1 = 10\ 00\ 00$, $\vec C_2
  = 11\ 00\ 00$.

  Further, for each point $P_s=A_iB_jC_k$ we define a vector $\vec P_s=
  \vec A_i + \vec B_j + \vec C_k$. 
\end{definition}

\begin{definition}
  We also define the standard bilinear form $V\times V\to\bZ_2$: \linebreak
  $(x_1,\dotsc,x_6) \cdot (y_1,\dotsc,y_6) = \sum_{i=1}^6 x_iy_i$. 
\end{definition}

\begin{lemma}\label{thm:torsion2-5}
  The restriction map $\rho\colon\Tors\Pic(X')\to A_0[2]\times B_0[2]\times
  C_0[2]$ is injective, and the image is identified with the
  orthogonal complement of the subspace generated by the vectors $\vec
  P_s$.
\end{lemma}
\begin{proof}
  The restrictions of the following divisors to $V$ give the subset $B_0[2]$:
  \begin{displaymath}
    0, \quad A_1-A_2 = 00\ 10\ 00, \quad
    A_1-A_3-C_0 = 00\ 11\ 00, \quad A_2 -A_3-C_0 = 00\ 01\ 00.
  \end{displaymath}
  Among these, the divisors containing $A_1$ are precisely those for
  which the vector $v\in B_0[2]\subset V$ satisfies $v\cdot \vec A_1
  =1$. Repeating this verbatim, one has the same results for the
  divisors $A_2,\dotsc, C_2$ and vectors $\vec A_2,\dotsc,\vec C_2$.

  Let $\bar D$ be a linear combination of the divisors $\bar A_1-\bar
  A_2$, $\bar A_1-\bar A_3-\bar C_0$, $\bar A_2-\bar A_3-\bar C_0$,
  and the corresponding divisors for $C_0[2]$, $A_0[2]$. Define the
  vector $v(D)\in V$ to be the sum of the corresponding vectors
  $A_1-A_2 \in V$, etc. 

  Now assume that the vector $v(D)$ satisfies the condition $v(D)\cdot
  \vec P_s=0$ for all the points $P_s$. Then the coefficients of the
  exceptional divisors $\bar E_s$ in the divisor $\epsilon^*(\bar D)$
  on $Y'$ are even (and one can also easily arrange them to be zero
  since the important part is working modulo 2). Therefore, a lift of
  $\eps^*(\bar D)$ to $X'$ is well defined and is a torsion in
  $\Pic(X')$.

  This shows that the image of the homomorphism $\rho\colon\Tors\Pic
  X'\to V$ contains the space $\langle \vec P_s\rangle^\perp$. But
  this space already has the correct dimension. Indeed, for $3\le
  K^2\le 5$ the vectors $\vec P_s$ are linearly independent, and for
  $K^2=2$ the vectors $\vec P_1 = \vec A_1+\vec B_1+\vec C_1$, $\vec P_2 = \vec
  A_1+\vec B_2+\vec C_2$, $\vec P_3=\vec A_2+\vec B_1+\vec C_2$, $\vec P_4 =
  \vec A_2+\vec B_2+\vec C_1$ are linearly dependent (their sum is
  zero) and span a subspace of dimension 3; thus the orthogonal
  complement has dimension 3 as well. Therefore, $\rho$ is a
  bijection of $\Tors \Pic(X')$ onto $\langle \vec P_s\rangle^\perp$.
\end{proof}

\begin{theorem}\label{thm:PicX2-5}
  One has the following:
  \begin{enumerate}
  \item The homomorphism
    \begin{eqnarray*}
      \phi\colon
      \Pic X' &\to& \bZ^{1+k} \times \Pic A'_0 \times \Pic B'_0 \times
      \Pic C'_0 \\ 
      L&\mapsto& (d(L)=L\cdot K_{X'},\  
      L\cdot \frac12 E_s,\ 
      L|_{A'_0},\  L|_{B'_0},\  L|_{C'_0} )
    \end{eqnarray*}
    is injective, and the image is the subgroup of index $3\cdot 2^n$
    in $\bZ^{4+k} \times A'_0[2]\times B'_0[2]\times C'_0[2]$, where
    $n=6-K^2$ for $3\le K^2\le 6$ and $n=3$ for $K^2=2$. 
  \item $\phi$ induces an isomorphism $\Tors(\Pic X')\isoto \langle
    \vec P_s\rangle^\perp \subset A'_0[2]\times B'_0[2]\times C'_0[2]$. 
  \item The curves $A'_i,B'_i,C'_i$, $0\le i\le 3$, generate $\Pic X'$.
  \end{enumerate}
\end{theorem}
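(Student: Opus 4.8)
The plan is to mirror the structure of the $K^2=6$ case (Theorem~\ref{thm:PicX}), using the two lemmas already in hand: Lemma~\ref{lem:lattices2-5} gives the identification of $\Pic X'/\Tors$ with $\Pic Y'$ via $\frac12\pi'^*$, and Lemma~\ref{thm:torsion2-5} describes the torsion as $\langle \vec P_s\rangle^\perp$. The three parts of the theorem are really one injectivity statement, one torsion identification (already proved), and one generation statement, so I would organize the proof around establishing injectivity of $\phi$ and computing the index of its image, then deduce (2) and (3).

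First I would verify that $\phi$ is well defined and record the target group. The new feature compared to $K^2=6$ is the block $\bZ^k$ recording the intersection numbers $L\cdot\frac12 E_s$; since $\pi'^*\bar E_s = E_s$ and the $E_s$ are $(-1)$-curves upstairs (with $E_s^2=-2$, as $\frac12 E_s$ has square $-\frac12$), these pairings land in $\bZ$. I would then prove injectivity of $\phi$ by analyzing its kernel: if $d(L)=0$, all $L\cdot\frac12 E_s=0$, and the three elliptic restrictions vanish, I want to conclude $L=0$. The restriction of $\phi$ to $\Tors\Pic X'$ is injective by Lemma~\ref{thm:torsion2-5} (that is exactly statement (2), once I check the image lands in $\langle\vec P_s\rangle^\perp$ and $d=0$, $L\cdot\frac12 E_s=0$ on torsion). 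For a class $L$ with $\phi(L)=0$ that is \emph{not} a priori torsion, I would argue that the free part is detected by the numerical data: under $\frac12\pi'^*$ a numerical class on $X'$ corresponds to a class $\bar D\in\Pic Y'$, and the pairings $d(L)=L\cdot K_{X'}=\frac12\pi'^*(-K_{Y'})\cdot L$ together with $L\cdot\frac12 E_s = \bar D\cdot\bar E_s$ and the elliptic-curve restrictions pin down $\bar D$ in $\Pic Y'\simeq\bZ^{4+k}/(\text{?})$; the point is that $\{-K_{Y'}, \bar E_1,\dots,\bar E_k\}$ together with enough of the data to see the $\bP^2$-coordinates separates classes on the del Pezzo $Y'$, so $L$ is torsion, and then Lemma~\ref{thm:torsion2-5} finishes it.

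Next I would compute the index. The image meets $\bZ^{4+k}$-part in the sublattice where the analogue of the $K^2=6$ divisibility condition $3\mid d+a_0^0+b_0^0+c_0^0$ holds (index $3$), coming from the fact that $\Pic X'$ has a $\bZ/3$ relation visible already on $Y$; combined with the torsion index, which by Lemma~\ref{thm:torsion2-5} is $2^n$ with $n=6-K^2$ (resp.\ $n=3$ for $K^2=2$) since $|A'_0[2]\times B'_0[2]\times C'_0[2]|=2^6$ and $|\langle\vec P_s\rangle^\perp| = 2^{6-n}$, giving torsion index $2^n$. Multiplying gives the claimed index $3\cdot 2^n$. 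For part (3), I would use Lemma~\ref{lem:lattices2-5}: the images $\frac12\pi'^*(\bar A'_i), \frac12\pi'^*(\bar B'_i), \frac12\pi'^*(\bar C'_i)$ are the classes $A'_i,B'_i,C'_i$ and, together with the fact that these (plus the $\bar E_s$, which are themselves expressible through the $A'_i,B'_i,C'_i$ as shown in the proof of Lemma~\ref{lem:lattices2-5}) generate $\Pic Y'$, they generate $\Pic X'/\Tors$; then the torsion classes are also realized as differences of these same curves (as in the start of the proof of Lemma~\ref{thm:torsion2-5}, where $A_1-A_2$, $A_1-A_3-C_0$, etc., are written out), so the $A'_i,B'_i,C'_i$ generate all of $\Pic X'$, free and torsion parts together.

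The main obstacle I expect is part (1), specifically checking injectivity cleanly on the nose and verifying the index is exactly $3\cdot2^n$ rather than off by a power of $2$. The delicate point is the interaction between the $\bZ^k$-coordinates $L\cdot\frac12 E_s$ and the torsion: one must ensure the exceptional-curve pairings are genuinely independent constraints that neither create nor destroy torsion, and that the $\bZ/3$ condition is unaffected by passing from $K^2=6$ to smaller $K^2$. Concretely I would reduce the index computation to the exact sequence $0\to\Tors\Pic X'\to\Pic X'\to\Pic X'/\Tors\to 0$, handle the free quotient via Lemma~\ref{lem:lattices2-5} (contributing the factor $3$ exactly as in Theorem~\ref{thm:PicX}) and the torsion via Lemma~\ref{thm:torsion2-5} (contributing $2^n$), so that the index is multiplicative across the sequence; the $K^2=2$ case needs the separate bookkeeping $n=3$ because the $\vec P_s$ become linearly dependent, which is already isolated in Lemma~\ref{thm:torsion2-5} and so causes no new difficulty here.
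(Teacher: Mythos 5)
Your proposal is correct and follows essentially the same route as the paper: part (2) is exactly Lemma~\ref{thm:torsion2-5}, part (1) is deduced from it together with the lattice identification of Lemma~\ref{lem:lattices2-5} (the free part contributing the index $3$ as in Theorem~\ref{thm:PicX}, the torsion part contributing $2^n$, multiplicatively via the exact sequence), and part (3) combines generation of $\Pic Y'$ by the curves with the explicit torsion representatives --- which is precisely the paper's much terser argument with the intended details filled in. One slip worth correcting: since $\pi'^*(\bar E_s)=E_s$, one has $E_s^2=4\bar E_s^2=-4$ and $(\frac12 E_s)^2=-1$, not $E_s^2=-2$ and $(\frac12 E_s)^2=-\frac12$ (the latter would actually contradict integrality); the correct reason the pairings $L\cdot\frac12 E_s$ land in $\bZ$ is that $\frac12 E_s=\frac12\pi'^*(\bar E_s)$ is an integral class modulo torsion by Lemma~\ref{lem:lattices2-5}.
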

\begin{proof}
  (2) is \eqref{thm:torsion2-5} and (1) follows from it. For 
  (3), note that $\Pic X'/\Tors=\Pic Y'$ is generated by the divisors
  $A'_i,B'_i,C'_i$ and that the proof of the previous theorem shows that
  $\Tors\Pic X'$ is generated by certain linear combinations of these
  divisors.
\end{proof}

\section{Effective divisors on Burniat surfaces with $K^2=6$}

Since $\frac12\pi^*$ and $\frac12\pi_*$ provide isomorphisms between
the $\bQ$-vector spaces $(\Pic Y)\otimes\bQ$ and $(\Pic X)\otimes\bQ$,
it is obvious that the cones of effective $\bQ$- or $\bR$-divisors on $X$
and $Y$ are naturally identified. 
In this section, we would like to prove the following description of the
semigroup of effective $\bZ$-divisors:

\begin{theorem}\label{thm:eff-divisors}
  The curves $A_i,B_i,C_i$, $0\le i\le 3$, generate 
  the semigroup of effective $\bZ$-divisors on Burniat surface $X$.
\end{theorem}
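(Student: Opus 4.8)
The plan is to reduce the statement to a question entirely about the del Pezzo surface $Y=\Bl_3\bP^2$, using the earlier structural results. First I would observe that if $D$ is an effective $\bZ$-divisor on $X$, then $\bar D := \frac12\pi_*(D)$ is an effective $\bZ$-divisor on $Y$ whose class corresponds to $D$ modulo torsion via Lemma~\ref{lem:lattices}. The essential point is that the building data exhibits the curves $A_i,B_i,C_i$ as the reduced preimages of the branch curves $\bar A_i,\bar B_i,\bar C_i$, with $\pi^*(\bar A_i)=2A_i$ etc. So the first step is to understand the effective semigroup on $Y$ generated by the twelve curves $\bar A_i,\bar B_i,\bar C_i$, and then lift a given expression upstairs while keeping track of the $\bZ_2^6$-worth of torsion supplied by Theorem~\ref{thm:PicX}.

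The key structural fact I would want is that the twelve curves $\bar A_i,\bar B_i,\bar C_i$ already generate the effective semigroup on the del Pezzo surface $Y$ of degree $6$. This is the geometric heart of the matter: these are all the negative and $0$-curves in the relevant configuration (the four $(-1)$-curves plus the rulings), and on a del Pezzo surface of degree $6$ the effective cone is generated by the $(-1)$-curves, so any effective class decomposes as a nonnegative integer combination of them plus a nef remainder; one then checks that the nef part is itself represented by a sum of the ruling fibers $f_1,f_2,f_3$, which are among the $\bar A_i,\bar B_i,\bar C_i$ (for $i=1,2$). I would carry this out by writing a general effective class in terms of $h_1,h_2,f_1,f_2,f_3$ and the exceptional/negative curves and verifying directly that nonnegativity of the intersection with each generator forces a nonnegative combination.

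Having settled the downstairs semigroup, the main work is the lift. Given an effective $D$ on $X$ with $\bar D=\frac12\pi_*D=\sum (a_i\bar A_i+b_i\bar B_i+c_i\bar C_i)$ with nonnegative integer coefficients, the divisor $D_0:=\sum(a_iA_i+b_iB_i+c_iC_i)$ is effective on $X$ and satisfies $\frac12\pi_*D_0=\bar D$, so $D$ and $D_0$ differ by a torsion class $\tau\in V=A_0[2]\times B_0[2]\times C_0[2]$. The real content is then to show that one can absorb this torsion correction while staying inside the semigroup generated by the $A_i,B_i,C_i$. Here I would use the explicit torsion coordinates from Theorem~\ref{thm:PicX}: each $2$-torsion element of $\Pic X$ is realized, after adding an appropriate combination of the generating curves, by differences such as $A_1-A_2$, $A_1-A_3-C_0$, etc.\ (exactly the divisors computed in the proof of Lemma~\ref{thm:torsion2-5}), whose restrictions to $A_0,B_0,C_0$ run over all of $V$. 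The strategy is to add a suitable positive combination of the generators to $D_0$ first, so that subtracting the representative of $\tau$ still leaves every coefficient nonnegative.

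The step I expect to be the main obstacle is precisely this last torsion-absorption: a priori the effective divisor $D$ could have small coefficients, and one cannot freely subtract the torsion-representing differences without risking negative coefficients. The resolution I anticipate is to exploit that $D$ and $D_0$ are genuinely linearly equivalent effective divisors differing by torsion, so the difference $D-D_0$, though numerically trivial, is represented by an \emph{actual} effective–minus–effective combination of the curves $A_i,B_i,C_i$ dictated by the torsion formulas; comparing the two effective divisors $D$ and $D_0$ in the same linear system and using that both push forward to $\bar D$ should pin down the coefficients of $D$ itself as a nonnegative combination of the generators. In effect, I would argue that once the numerical (downstairs) decomposition is nonnegative, the torsion ambiguity is rigid enough — being controlled by the linear relations among the $A_i,B_i,C_i$ on the elliptic curves $A_0,B_0,C_0$ — that the unique effective representative in each class is already the nonnegative combination of the listed curves.
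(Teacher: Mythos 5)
Your reduction to the del Pezzo surface $Y$ and the decomposition $D=D_0+\tau$ with $\tau\in\Tors\Pic X$ is a reasonable start, and the downstairs statement you need is indeed true (it is Lemma~\ref{lem:eff-semigroup}). Note, however, that your sketch of it is already off: the nef part of an effective class need not be a sum of ruling fibers --- for instance $h_1$ is nef with $h_1^2=1$ and is \emph{not} an integral combination of $f_1,f_2,f_3$ (dotting $h_1=\sum n_if_i$ with each $f_j$ forces $n_i=\tfrac12$); the paper instead proves the lemma via a totally generating polytope, and its nef semigroup requires the five generators $f_1,f_2,f_3,h_1,h_2$ (Lemma~\ref{lem:nef-semigroup}).

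The genuine gap is the torsion-absorption step, which you correctly identify as the main obstacle but then resolve by an argument resting on false premises. First, $D$ and $D_0$ are \emph{not} linearly equivalent --- they differ by the torsion class $\tau$ --- so there is no ``same linear system'' in which to compare them. Second, an effective divisor on $X$ need not be supported on the curves $A_i,B_i,C_i$ (think of the preimage of a general line), so there are no coefficients of ``$D$ itself'' to pin down; the theorem is a statement about classes in $\Pic X$. Most decisively, the rigidity you hope for is simply false: an effective class $\bar D$ on $Y$ has $64$ lifts to $\Pic X$ differing by torsion, and which of them are effective genuinely depends on the torsion part and cannot be decided by formal manipulation of the relations $A_1-A_2$, $A_1-A_3-C_0$, etc. Concretely, for $\bar D=-K_Y$ (effective downstairs) the lift $K_X=(3;\,1\ 00\ 1\ 00\ 1\ 00)$ is not effective because $p_g(X)=0$, and the lifts $(3;\,1\ 10\ 1\ 10\ 1\ 10)$ and $(3;\,0\ 00\ 0\ 00\ 0\ 00)$ are not effective either --- but proving \emph{that} requires the nontrivial vanishing result of \cite[Lemma 5]{alexeev2012derived-categories} --- while other lifts of the same numerical class are effective and lie in $\cS$. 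Any correct proof must therefore invoke actual vanishing theorems and some mechanism that handles all $64$ torsion translates; this is exactly what the paper supplies and your proposal lacks: the minimal-form reduction, the Riemann--Roch argument showing that every non-exceptional nef class of degree $\ge 7$ is effective with $\bar D+K_Y$ effective downstairs (Lemmas~\ref{lem:exceptions}, \ref{lem:D+K}), the finite computer checks in low degree, and crucially the verification that $K_X+F+\nu\in\cS$ for every $(-1)$-curve $F$ and \emph{every} torsion $\nu$ --- the step that actually absorbs the torsion, and which is only available because classes of degree $\ge 7$ sit deep enough inside the effective cone. In small degrees, precisely where your absorption visibly fails for lack of room, no soft argument is known and the paper falls back on exhaustive case checks plus the non-effectivity lemma.
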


We start with several preparatory lemmas.

\begin{lemma}\label{lem:eff-semigroup}
  The semigroup of effective $\bZ$-divisors on $Y$ is generated by the
  $(-1)$-curves $\bar A_0,\bar B_0,\bar C_0,\bar A_3,\bar B_3,\bar
  C_3$.
\end{lemma}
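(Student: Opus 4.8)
The plan is to show that $Y=\Bl_3\bP^2$, the del Pezzo surface of degree $6$, has an effective cone generated (as a semigroup over $\bZ$) precisely by its six $(-1)$-curves. Since $Y$ is a smooth rational surface with ample anticanonical class and Picard number $4$, I expect the $(-1)$-curves to already cut out the effective cone, and the main content is the integral (semigroup) statement rather than the merely rational (cone) statement.

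First I would recall the structure of $\Pic Y$. We have a $\bZ$-basis given by the pullback of a line $h$ from $\bP^2$ and the three exceptional curves of the blowup; equivalently one can use the six $(-1)$-curves directly. The six $(-1)$-curves $\bar A_0,\bar B_0,\bar C_0,\bar A_3,\bar B_3,\bar C_3$ are the three exceptional divisors together with the three strict transforms of the lines through pairs of the blown-up points, and these span $\Pic Y$ over $\bZ$. I would record their pairwise intersection numbers (each squares to $-1$; two of them meet in $0$ or $1$ depending on whether they are disjoint or meet transversally) and note that $-K_Y$ is the sum of all six divided by two, so that $-K_Y\cdot\bar L=1$ for each $(-1)$-curve $\bar L$.

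The core step is the following. Given any effective $\bZ$-divisor $\bar D$ on $Y$, I want to write $\bar D=\sum n_i \bar L_i$ with $n_i\in\bZ_{\ge0}$ and $\bar L_i$ among the six $(-1)$-curves. The strategy is descending induction on the degree $\bar D\cdot(-K_Y)$. If this degree is zero then, since $-K_Y$ is ample, $\bar D=0$ and there is nothing to prove. Otherwise I would argue that $\bar D$ must meet at least one of the six $(-1)$-curves, say $\bar L$, with $\bar D\cdot \bar L<0$, forcing $\bar L$ to be a fixed component of $\bar D$; subtracting it gives an effective divisor $\bar D-\bar L$ of strictly smaller anticanonical degree, and induction finishes the argument. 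The point making this work is that the six $(-1)$-curves are the only negative curves on $Y$, so any irreducible component of $\bar D$ with nonnegative self-intersection moves and its class is a nonnegative combination of nef classes, which on a del Pezzo surface of degree $6$ are themselves nonnegative combinations of the $(-1)$-curves. Concretely, I would check that the extremal rays of the nef cone (the pullbacks $h_1,h_2$ of hyperplanes under the two maps to $\bP^2$ and the three ruling classes $f_1,f_2,f_3$) are each expressible as sums of two or three of the six $(-1)$-curves, so every nef class lies in the semigroup they generate.

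The main obstacle will be the integrality bookkeeping: passing from the rational statement (the effective cone is spanned by the six rays) to the semigroup statement requires verifying that no ``fractional'' combinations are needed, i.e.\ that the six generators together with the relations among them actually produce every lattice point of the cone. Here I expect the cleanest route to be the reduction argument above rather than an explicit dual-cone or Hilbert-basis computation: the key inequality is that if $\bar D$ is effective and $\bar D\cdot \bar L_i\ge 0$ for all six $(-1)$-curves then $\bar D$ is nef, and a nef integral class on this del Pezzo surface is a nonnegative integer combination of the $h_j$ and $f_j$, each of which is a sum of $(-1)$-curves. Verifying this last claim amounts to a finite check on the four-dimensional lattice $\Pic Y$, which I would carry out by exhibiting explicit expressions $h_1=\bar A_0+\bar B_3=\cdots$, $f_j=\bar L+\bar L'$ for the appropriate pairs, and confirming these generate the nef semigroup. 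Once the nef case is handled, the descending induction on anticanonical degree closes the proof.
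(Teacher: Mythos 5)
Your proposal is correct in outline, but it takes a genuinely different route from the paper. The paper's proof is a single lattice-combinatorial step: after invoking the cone theorem for the rational statement, it observes that the cross-section polytope $Q = NE_1(Y)\cap\{C \mid -K_Y\cdot C=1\}$ is a triangular prism whose six vertices are the $(-1)$-curves, that these vertices generate the lattice $\Pic Y\simeq\bZ^4$, and that the prism splits into three elementary simplices; hence the cone is ``totally generating'' and the semigroup statement follows at once. Your proof instead runs a descending induction on $-K_Y\cdot\bar D$, subtracting $(-1)$-curves met negatively, and reduces everything to the nef case; the remaining claim --- that the nef semigroup is generated by $f_1,f_2,f_3,h_1,h_2$ --- is precisely Lemma~\ref{lem:nef-semigroup} of the paper, proved there by the same kind of subtraction: if the rational expression of a nef $\bar D$ has $a_1>0$, then since $f_1\cdot F\in\{0,1\}$ for each of the six $(-1)$-curves $F$ and $\bar D\cdot F$ is a nonnegative integer, $\bar D-f_1$ is again nef, and one inducts on degree. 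This is the one place where your write-up falls short of a proof: ``confirming these generate the nef semigroup'' restates the claim rather than verifying it, and it is not literally a finite check (the semigroup is infinite) unless you organize it as a Hilbert-basis computation; the subtraction argument just described is the clean way to close it, and using it creates no circularity, since the paper's proof of Lemma~\ref{lem:nef-semigroup} does not rely on Lemma~\ref{lem:eff-semigroup}. As for what each approach buys: the paper's prism decomposition is shorter and entirely self-contained, while your reduction is more geometric, makes visible why integrality works (all intersection numbers with the $(-1)$-curves are $0$ or $1$), and is essentially the ``minimal form'' strategy the paper itself deploys later in the proof of Theorem~\ref{thm:eff-divisors}.

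Three small slips, none fatal. The sum of the six $(-1)$-curves is $-K_Y$ itself, not $-2K_Y$ (each has anticanonical degree $1$ and $K_Y^2=6$). By degree, each $h_j$ must be a sum of \emph{three} $(-1)$-curves and each $f_i$ of two, so an identity such as $h_1=\bar A_0+\bar B_3$ cannot be right as written. Finally, an effective divisor of positive degree need not meet any $(-1)$-curve negatively (any nonzero nef divisor is a counterexample); the correct dichotomy is the one you state at the end --- either $\bar D\cdot\bar L_i<0$ for some $i$, or $\bar D$ is nef --- not the claim made mid-argument that a positive-degree divisor ``must'' meet some $(-1)$-curve negatively.
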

\begin{proof}
  Since $-K_Y$ is ample, the Mori-Kleiman cone $NE_1(Y)$ of effective
  curves in $(\Pic Y)\otimes\bQ$ is generated by extremal rays,
  i.e. the $(-1)$-curves $\bar A_0,\bar B_0,\bar C_0,\bar A_3,\bar
  B_3,\bar C_3$. We claim that moreover the semigroup of integral
  points in $NE_1(Y)$ is generated by these points, i.e. the polytope
  $Q=NE_1(Y)\cap \{C\mid -K_YC=1\}$ is totally generating. The
  vertices 
  of this polytope in $\bR^3$ are $(-1,0,0)$, $(0,-1,0)$, $(0,0,-1)$,
  $(0,1,1)$, $(1,0,1)$, $(1,1,0)$, and the lattice $\Pic Y=\bZ^4$ is
  generated by them. 
  It is a prism over a triangular base, and it is totally
  generating because it can be split into 3 elementary simplices.
\end{proof}

\begin{lemma}\label{lem:nef-semigroup}
  The semigroup of nef $\bZ$-divisors on $Y$ is generated by $f_1$,
  $f_2$, $f_3$, $h_1$, and $h_2$. 
\end{lemma}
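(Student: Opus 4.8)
The plan is to compute the nef cone of $Y=\Bl_3\bP^2$ explicitly by dualizing the Mori cone, and then to exhibit, for an arbitrary nef integral class, a concrete expression as a non-negative integer combination of the five proposed generators. I would work in the standard basis $\Pic Y=\bZ H\oplus\bZ E_1\oplus\bZ E_2\oplus\bZ E_3$, where $H$ is the pullback of a line under one contraction $Y\to\bP^2$ and $E_1,E_2,E_3$ are the exceptional curves.

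By Lemma~\ref{lem:eff-semigroup} the cone $NE_1(Y)$ is generated by the six $(-1)$-curves $E_1,E_2,E_3$ and $H-E_i-E_j$. Hence a class $D=dH-m_1E_1-m_2E_2-m_3E_3$ is nef if and only if
\[
D\cdot E_i=m_i\ge 0\quad(i=1,2,3)\qquad\text{and}\qquad D\cdot(H-E_i-E_j)=d-m_i-m_j\ge 0\quad(i\ne j).
\]
In this basis the five generators are $f_i=H-E_i$, $h_1=H$, and $h_2=2H-E_1-E_2-E_3$ (the hyperplane class of the second $\bP^2$, obtained from the first by the quadratic transformation centered at the three blown-up points), and a one-line check against the inequalities above confirms that all five are nef.

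The core of the argument is a single constructive decomposition. Given a nef integral $D=dH-\sum_i m_iE_i$, I would set
\[
\gamma=\min(m_1,m_2,m_3),\qquad \alpha_i=m_i-\gamma,\qquad \beta=d-m_1-m_2-m_3+\gamma .
\]
Matching $E_i$-coefficients forces $\alpha_i=m_i-\gamma$, and then matching the $H$-coefficient forces $\beta=d-\sum_i m_i+\gamma$, so
\[
D=\alpha_1f_1+\alpha_2f_2+\alpha_3f_3+\beta h_1+\gamma h_2
\]
is an identity with all five coefficients integers. The coefficients $\gamma$ and $\alpha_i=m_i-\gamma$ are non-negative because $\gamma=\min_i m_i\ge 0$, and $\beta\ge 0$ because $\beta=d-(\text{sum of the two largest }m_i)$ and the binding nef inequality $d\ge m_i+m_j$ applied to the two largest indices gives exactly this. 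This presents every nef integral divisor as a non-negative integer combination of $f_1,f_2,f_3,h_1,h_2$, which is the assertion.

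The only delicate point---and the reason I would not imitate the triangulation argument used for Lemma~\ref{lem:eff-semigroup}---is that the five generators do not share a common anticanonical degree ($-K_Y\cdot f_i=2$ while $-K_Y\cdot h_1=-K_Y\cdot h_2=3$), so no single hyperplane slice of the nef cone has all of them as vertices, and a naive ``totally generating polytope'' argument does not apply directly. The direct decomposition sidesteps this entirely: the sole thing one must check is that the choice $\gamma=\min_i m_i$ keeps $\beta$ non-negative, and this is precisely where the nef inequality is consumed. I expect this reconciliation of integrality with non-negativity to be the only nontrivial step; the rest is routine duality on the del Pezzo surface of degree $6$.
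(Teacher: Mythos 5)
Your proof is correct, and it handles the crucial integrality step by a genuinely different mechanism than the paper. Both arguments start identically: the Mori cone is spanned by the six $(-1)$-curves (your $E_i$ and $H-E_i-E_j$, the paper's $\bar A_0,\bar B_0,\bar C_0,\bar A_3,\bar B_3,\bar C_3$), so nefness is exactly the system of inequalities you list, and the five classes $f_1,f_2,f_3,h_1,h_2$ span the nef cone over $\bQ$. Where you diverge is in passing from the cone statement to the semigroup statement. You give a closed-form decomposition in the $(H,E_1,E_2,E_3)$ basis, with $\gamma=\min_i m_i$ and the other coefficients forced by linear algebra, and you verify non-negativity directly, the inequality $d\ge m_i+m_j$ for the two largest $m_i$ being precisely what makes $\beta\ge 0$; your computation checks out. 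The paper instead argues by descent: write the nef integral $\bar D$ as a non-negative \emph{rational} combination of the five generators; if, say, the coefficient of $f_1$ is positive, then since $f_1$ meets each $(-1)$-curve $F$ in $0$ or $1$, one has $\bar D\cdot F>0$ whenever $f_1\cdot F=1$, hence $\bar D\cdot F\ge 1$ by integrality, so $\bar D-f_1$ is again nef; induction on the anticanonical degree finishes. Your formula buys complete explicitness and even a normal form (at least one $\alpha_i=0$, reflecting the relation $f_1+f_2+f_3=h_1+h_2$, which shows decompositions are not unique); the paper's descent is coordinate-free and generalizes to any situation where the proposed generators meet the extremal rays in $0$ or $1$. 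Your closing worry is well taken in that the ``totally generating polytope'' trick of Lemma~\ref{lem:eff-semigroup} does not transfer, but note that the paper does not attempt it here either: it invokes the prism only to identify the five extremal nef classes (its facets), not to prove integral generation.
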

\begin{proof}
  Again, for the $\bQ$-divisors this is obvious by MMP: a divisor
  $\bar D$ is nef iff $\bar D\bar F\ge0$ for $\bar F\in \{ \bar
  A_0,\bar B_0,\bar C_0,\bar A_3,\bar B_3,\bar C_3\}$, and the
  extremal nef $\bar D$ divisors correspond to contractions $Y\to Y'$ with
  $\rk\Pic Y'=1$. Another proof: the extremal nef divisors correspond
  to the faces of the triangular prism from the proof of
  Lemma~\ref{lem:eff-semigroup}, and there are 5 of them: 3 sides,
  top, and the bottom.

  Now let $\bar D$ be a nonnegative linear combination $\bar D= \sum
  a_if_i + b_jh_j$ and let us assume that $a_1>0$ (resp. $b_1>0$). Since
  the intersections of $f_1$ (resp. $h_1$) with the curves $F$ above
  are 0 or 1, it follows that $\bar D-f_1$ (resp. $D-h_1$) is also
  nef. We finish by induction. 
\end{proof}

We write the divisors $\bar D$ in $\Pic Y$ using the symmetric
coordinates 
\begin{displaymath}
(d; a_0^0, b_0^0, c_0^0; a_3^0, b_3^0, c_3^0), \text{ where }
d=\bar D(-K_Y),\  a_0^0= \bar D\bar A_0, \dotsc, c_3^0= \bar D\bar
C_3.  
\end{displaymath}

Note that, as in Theorem~\ref{thm:PicX}, $\Pic Y$ and can be described
either as the subgroup of $\bZ^4$ with coordinates $(d; a_0^0, b_0^0,
c_0^0)$ satisfying the congruence $3 | (d+ a_0^0+ b_0^0+ c_0^0)$, or
as the subgroup of $\bZ^4$ with coordinates $(d; a_3^0, b_3^0, c_3^0)$
satisfying the congruence $3 | (d+ a_3^0+ b_3^0+ c_3^0)$.

\begin{lemma}\label{lem:exceptions}
  The function $p_a(\bar D) = \dfrac{\bar D(\bar D+K_Y)}{2}+1$ on the
  set of nef $\bZ$-divisors on $Y$ is strictly
  positive, with the exception of the following divisors, up to
  symmetry:
  \begin{enumerate}
  \item $(2n;n,0,0;n,0,0)$ for $n\ge1$, one has $p_a = -(n-1)$ 
  \item $(2n;n-1,1,0;n-1,1,0)$ for $n\ge1$, one has $p_a=0$.
  \item $(2n+1;n,1,1;n-1,0,0)$ and $(2n+1;n-1,0,0;n,1,1)$ 
    for $n\ge1$, $p_a=0$.
  \item $(6;2,2,2;0,0,0)$ and $(6;0,0,0;2,2,2)$, $p_a=0$. 
  \end{enumerate}
  The divisors in $(1)$ are in the linear system $|nf_i|$, where $f_i$
  is a fiber of one of the three rulings $Y\to \bP^1$. The divisors in
  $(2)$ and $(3)$ are obtained from these by adding a section. The
  divisors in $(4)$ belong to the linear systems $|2h_1|$ and
  $|2h_2|$.
\end{lemma}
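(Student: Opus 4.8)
The plan is to reduce the statement to the explicit combinatorics of the nef semigroup established in Lemma~\ref{lem:nef-semigroup}. By that lemma I may write any nef $\bZ$-divisor as $\bar D = a_1 f_1 + a_2 f_2 + a_3 f_3 + b_1 h_1 + b_2 h_2$ with $a_i, b_j \ge 0$. Since $f_1+f_2+f_3 = h_1+h_2 = -K_Y$, this expression is not unique; I first normalize it by repeatedly trading $h_1+h_2$ for $f_1+f_2+f_3$, so that I may assume $b_1 b_2 = 0$. Recording the intersection numbers $f_i^2=0$, $f_if_j=1$ ($i\ne j$), $h_j^2=1$, $h_1h_2=2$, $f_ih_j=1$, together with $-K_Y f_i = 2$ and $-K_Y h_j = 3$, a direct substitution gives
\[
2\bigl(p_a(\bar D)-1\bigr) = \bar D^2 + \bar D K_Y = (S+T)^2 - \sum_i a_i^2 + 2b_1 b_2 - 2S - 3T,
\]
where $S = a_1+a_2+a_3$ and $T = b_1+b_2$; after the normalization the cross term $2b_1b_2$ is $0$.

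Next I extract a necessary condition for $p_a(\bar D)\le 0$. Since $\sum_i a_i^2 \le S^2$ and $b_1 b_2 \ge 0$, the displayed identity forces $(S+T)^2 - 2S - 3T + 2 \le S^2$, which factors as $(T-1)(2S+T-2)\le 0$. For $S,T\ge 0$ this can hold only when $T\le 2$. Thus any normalized nef divisor with nonpositive $p_a$ has a bounded $h$-contribution, and it remains to treat the three regimes $T=0,1,2$ separately; in each the formula collapses to a one- or two-parameter quadratic that can be solved outright.

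Carrying out these cases is routine. For $T=0$ one finds $p_a\le 0$ exactly for the configurations $(S,0,0)$, giving $\bar D = n f_i$ with $p_a = 1-n$, and $(S-1,1,0)$, giving $\bar D = (n-1)f_i + f_j$ with $p_a = 0$. For $T=1$ the identity reduces to $2p_a = S^2 - \sum_i a_i^2 \ge 0$, vanishing precisely when a single $a_i$ is nonzero, i.e.\ $\bar D = n f_i + h_j$. For $T=2$ the necessary condition forces $S=0$, leaving $2h_j$ (with $p_a=0$) as the only exception, since $h_1+h_2$ has been normalized away. Because $f_j$ and $h_j$ each meet every fiber of the $f_i$-ruling exactly once, the $(S-1,1,0)$ and $n f_i + h_j$ families are precisely the fiber multiples with a section adjoined, matching items (2) and (3), while the remaining families match (1) and (4).

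The one genuinely delicate point is bookkeeping rather than geometry: the representation $\sum a_i f_i + \sum b_j h_j$ is non-unique, so I must justify that the normalization $b_1 b_2 = 0$ is harmless (it is, since $p_a$, $\bar D^2$ and $d$ depend only on the class) and, at the end, translate the four families back into the symmetric coordinates $(d; a_0^0, b_0^0, c_0^0; a_3^0, b_3^0, c_3^0)$. There I must verify that, under the $S_3$-symmetry permuting the three rulings and the involution exchanging the two contractions $Y\to\bP^2$, the four families coincide with the stated lists (1)--(4), and that the genus values $-(n-1)$ and $0$ come out as claimed. This final translation, together with checking the small-$S$ boundary overlaps (e.g.\ the degeneration of $(S-1,1,0)$ to a single fiber at $S=1$), is where essentially all the care is needed.
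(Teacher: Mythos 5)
Your proof is correct, and it starts from the same foundation as the paper's own proof, namely Lemma~\ref{lem:nef-semigroup} (writing a nef $\bZ$-divisor as a nonnegative integer combination of $f_1,f_2,f_3,h_1,h_2$), but the way you carry out the analysis is genuinely different. The paper proceeds by induction on the number of generators added, using the additivity formula $p_a(\bar D_1+\bar D_2)=p_a(\bar D_1)+p_a(\bar D_2)+\bar D_1\bar D_2-1$: starting from $n_1f_1$ with $p_a=-(n_1-1)$, adding one further generator $g$ gives $p_a=0$, and the claim that all remaining configurations have $p_a>0$ is dispatched with ``one easily obtains.'' You instead expand $p_a$ in closed form as a quadratic in the coefficients, normalize away the relation $f_1+f_2+f_3=h_1+h_2=-K_Y$ so that $b_1b_2=0$, and extract the factored necessary condition $(T-1)(2S+T-2)\le 0$, which caps $T\le 2$ (with $T=2$ forcing $S=0$) and reduces the problem to three explicitly solvable cases; I have checked your quadratic identity, the factorization, and the case analysis (for $T=0$ with two nonzero $a_i$ one gets $p_a=(a_i-1)(a_j-1)$, for $T=1$ one gets $2p_a=S^2-\sum a_i^2$), and the resulting families do match items (1)--(4) under the translation to symmetric coordinates. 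What your route buys is completeness: the paper's induction has a real subtlety in that adding a generator $g$ changes $p_a$ by $\bar D\cdot g-1$, which can vanish or be negative, so one must track exactly which additions keep $p_a\le0$, and the paper leaves this implicit, whereas your single inequality replaces that bookkeeping; you also deal explicitly with the non-uniqueness of the representation, which the paper's argument silently tolerates. What the paper's route buys is brevity and geometric transparency: the induction exhibits each exceptional family directly as multiple fibers, then a section adjoined, which is precisely the geometric description appended to the statement, while in your argument that identification is deferred to the final coordinate translation.
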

\begin{proof}
  Let $\bar D$ be a nef $\bZ$-divisor. By
  Lemma~\ref{lem:nef-semigroup}, we can write $\bar D=\sum n_if_i +
  m_jh_j$ with $n_i,m_j\in\bZ_{\ge0}$. Let us say $n_1>0$. If
  $\bar D=n_1f_1$ then $p_a(\bar D)=-(n_1-1)$. Otherwise, $n_1f_1 +
  g\le \bar D$, where $g=f_j$, $j\ne1$, or $g=h_j$. Then using the
  elementary formula
  \begin{math}
    p_a(\bar D_1 + \bar D_2) = p_a(\bar D_1) + p_a(\bar D_2) + 
    \bar D_1 \bar D_2 -1,
  \end{math}
  we see that $p_a(n_1f_1 + g)=0$. Continuing this by induction and
  adding more $f_j$'s and $h_j$'s, one easily obtains that $p_a(\bar D)>0$
  with the only exceptions listed above. Starting with $m_1 h_1$
  instead of $n_1f_1$ works the same.
\end{proof}

\begin{corollary}\label{cor:exceptions}
  The function $\chi( D) = \dfrac{ D( D-K_X)}{2}+1$ on the
  set of nef $\bZ$-divisors on $Y$ is strictly positive, with the same
  exceptions as above. 
\end{corollary}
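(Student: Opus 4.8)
The plan is to relate the two functions $\chi(D)$ and $p_a(\bar D)$ directly, and then invoke Lemma~\ref{lem:exceptions}. The key observation is that under the identification $\frac12\pi^*\colon \Pic Y \to \Pic X/\Tors$ supplied by Lemma~\ref{lem:lattices}, a nef $\bZ$-divisor $\bar D$ on $Y$ corresponds to a divisor class $D = \frac12\pi^*(\bar D)$ on $X$, and this identification preserves the intersection form while sending $-K_Y$ to $K_X$. Thus $D\cdot D = \bar D\cdot\bar D$ and $D\cdot K_X = \bar D\cdot(-K_Y) = -\bar D\cdot K_Y$.

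First I would substitute these identities into the definition of $\chi$:
\begin{displaymath}
  \chi(D) = \frac{D(D - K_X)}{2} + 1 = \frac{\bar D\cdot\bar D - D\cdot K_X}{2} + 1 = \frac{\bar D\cdot\bar D + \bar D\cdot K_Y}{2} + 1 = \frac{\bar D(\bar D + K_Y)}{2} + 1 = p_a(\bar D).
\end{displaymath}
So the two functions are literally equal as functions on the set of nef $\bZ$-divisors (which, recall, is indexed the same way for $Y$ and $X$ via the lattice isomorphism). Once this identity is established, the corollary is immediate: $\chi(D)$ is strictly positive precisely where $p_a(\bar D)$ is, and the exceptional divisors are exactly the same list as in Lemma~\ref{lem:exceptions}.

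There is essentially no obstacle here beyond bookkeeping, since all the geometric content was already extracted in Lemma~\ref{lem:lattices} and Lemma~\ref{lem:exceptions}. The one point worth stating carefully is why $\chi(D)$ as defined equals the Euler characteristic one expects: for a surface $X$ with $q = p_g = 0$ one has $\chi(\cO_X) = 1$, so Riemann--Roch gives $\chi(\cO_X(D)) = \chi(\cO_X) + \frac{D(D-K_X)}{2} = 1 + \frac{D(D-K_X)}{2}$, which matches the formula for $\chi(D)$ in the statement. This confirms that the corollary is genuinely the statement about holomorphic Euler characteristics that one wants for later applications to effective divisors, and it is obtained at no extra cost from the numerical identity above.

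Expressed as a proof sketch: I would open by recalling that Lemma~\ref{lem:lattices} gives an isometry $\frac12\pi^*$ with $\frac12\pi^*(-K_Y) = K_X$, then note the two intersection-number substitutions $D^2 = \bar D^2$ and $D\cdot K_X = -\bar D\cdot K_Y$, carry out the one-line algebraic manipulation displayed above to conclude $\chi(D) = p_a(\bar D)$, and finally quote Lemma~\ref{lem:exceptions} to read off positivity and the exceptional list verbatim.
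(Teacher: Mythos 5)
Your proof is correct and follows essentially the same route as the paper: both reduce $\chi(D)$ to $p_a(\bar D)$ via the intersection-preserving identification $\frac12\pi^*$ (with $\frac12\pi^*(-K_Y)=K_X$) and then quote Lemma~\ref{lem:exceptions}. The paper states this in one line, merely adding the remark that $\chi(\cO_X)=1$ (which you also make via Riemann--Roch), so your write-up is just a more explicit version of the same argument.
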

\begin{proof}
  Indeed, since $\chi(\cO_X)=1$, one has $\chi(D) = p_a(\bar D)$.   
\end{proof}

\begin{lemma}\label{lem:D+K}
  Assume that $\bar D\ne0$ is a nef divisor on $X$ with $p_a(\bar
  D)>0$. Then the divisor  $\bar D + K_Y$ is effective.
\end{lemma}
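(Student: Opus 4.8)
The plan is to work downstairs on the rational surface $Y=\Bl_3\bP^2$, using the identification of Lemma~\ref{lem:lattices} to read the hypothesis as: $\bar D\ne0$ is a nef $\bZ$-divisor on $Y$, on which $-K_Y$ is ample and $\chi(\cO_Y)=1$. The goal, effectivity of $\bar D+K_Y$, I would obtain from a Riemann--Roch computation combined with a Serre duality vanishing. The key observation, which I would establish first, is that Riemann--Roch matches $\chi$ with $p_a$ exactly. Writing $L=\bar D+K_Y$, so that $L-K_Y=\bar D$, one has
\[
\chi\bigl(\cO_Y(\bar D+K_Y)\bigr)=\chi(\cO_Y)+\tfrac12\,L\cdot(L-K_Y)
=1+\tfrac12(\bar D+K_Y)\cdot\bar D=p_a(\bar D).
\]
Thus $\chi\bigl(\cO_Y(\bar D+K_Y)\bigr)=p_a(\bar D)>0$ by hypothesis.

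Next I would kill the top cohomology. By Serre duality $h^2(\bar D+K_Y)=h^0(-\bar D)$, and this is exactly where the assumption $\bar D\ne0$ enters. Since $\bar D$ is nef and nonzero on $Y$, its anticanonical degree $d=\bar D\cdot(-K_Y)$ is strictly positive: a nef class pairing to $0$ with the ample class $-K_Y$ would lie in $(-K_Y)^{\perp}$, which is negative definite by the Hodge index theorem, forcing $\bar D^2=0$ and then $\bar D\equiv0$, hence $\bar D=0$ in the torsion-free group $\Pic Y$. With $d\ge1$, the divisor $-\bar D$ has negative degree against the ample $-K_Y$ and so cannot be effective; therefore $h^0(-\bar D)=0$ and $h^2(\bar D+K_Y)=0$.

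Combining the two steps gives $h^0(\bar D+K_Y)\ge\chi\bigl(\cO_Y(\bar D+K_Y)\bigr)=p_a(\bar D)>0$, so $\bar D+K_Y$ is effective. I do not expect a real obstacle here; the only points needing care are the Riemann--Roch cancellation $\chi(\bar D+K_Y)=p_a(\bar D)$, which is precisely what makes $p_a>0$ the right hypothesis, and the role of $\bar D\ne0$ in forcing $-\bar D$ to be non-effective --- indeed for $\bar D=0$ one would have $\bar D+K_Y=K_Y$, which is not effective on a rational surface, so the hypothesis cannot be dropped. Note that Lemma~\ref{lem:exceptions} is not needed for this argument: it only records which nef divisors \emph{fail} the condition $p_a>0$.
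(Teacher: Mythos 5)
Your proof is correct and follows essentially the same route as the paper: Riemann--Roch to get $\chi(\bar D+K_Y)=p_a(\bar D)>0$, then Serre duality $h^2(\bar D+K_Y)=h^0(-\bar D)=0$ to conclude $h^0(\bar D+K_Y)\ge\chi>0$. The only difference is that you spell out, via the Hodge index theorem, why $\bar D\ne 0$ nef forces $h^0(-\bar D)=0$, a step the paper simply asserts.
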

\begin{proof}
  One has
  \begin{math}
    \chi(\bar D+K_Y) = \dfrac{(\bar D+K_Y)\bar D}2 + 1 = p_a(\bar D)>0.
  \end{math}
  Since $h^2(\bar D+K_Y)=h^0(-\bar D)=0$, this implies that $h^0(\bar D)>0$.
\end{proof}

\begin{definition}
  We say that an effective divisor $D$ on $X$ is \emph{in minimal
    form} if $DF\ge0$ for the elliptic curves $F\in\{
  A_0,B_0,C_0,A_3,B_3,C_3 \}$, and for the curves among those that satisfy
  $DF=0$, one has $D|_F \ne 0$ in $F[2]$.

If either of these conditions fails then $D-F$ must also be
effective since $F$ is then in the base locus of $|D|$. A minimal form
is obtained by repeating this procedure until it stops or one obtains
a divisor of negative degree, in which case $D$ obviously was not
effective. We do not claim that a minimal form is unique.
\end{definition}

\begin{proof}[Proof of Thm.~\ref{thm:eff-divisors}]
Let $D$ be an effective divisor on $X$. We have to show that it
belongs to the semigroup $\cS= \langle A_i, B_i,C_i,\ 0\le i\le 3
\rangle$. 

\smallskip

\emph{Step 1: One can assume that $D$ is in minimal form}. Obviously.
\smallskip

\emph{Step 2.: The statement is true for $d\le 6$.} There are finitely
many cases here to check. We checked them using a computer script. For
each of the divisors, putting it in minimal form makes it obvious that
it is either in $\cS$ or it is not effective because it has negative
degree, with the exception of the following three divisors, in the
notations of Theorem~\ref{thm:PicX}: $(3; 1\ 10\ 1\ 10\ 1\ 10)$, $(3;
0\ 00\ 0\ 00\ 0\ 00)$, $(3; 1\ 00\ 1\ 00\ 1\ 00)$. The first two
divisors are not effective by \cite[Lemma
5]{alexeev2012derived-categories}. The third one is not effective
because it is $K_X$ and $h^0(K_X)=p_g(X)=0$.

\smallskip

\emph{Step 3: The statement is true for nef divisors of degree $d\ge 7$
which are \emph{not} the exceptions listed in
Lemma~\ref{lem:exceptions}.}

One has $K_X(K_X-D)<0$, so $h^0(K_X-D)=0$ and the condition
$\chi(D)>0$ implies that $D$ is effective. We are going to show that $D$ is
in the semigroup $\cS$.

Consider the divisor $D-K_X$ which modulo torsion is identified with
the divisor $\bar D+K_Y$ on $Y$. By Lemmas~\ref{lem:D+K} and
\ref{lem:eff-semigroup}, $\bar D+K_Y$ is a positive $\bZ$-combination of
$\bar A_0,\bar B_0,\bar C_0,\bar A_3,\bar B_3,\bar C_3$. This means
that
\begin{displaymath}
  D = K_X + \text{(a positive combination of }
  A_0,B_0,C_0,A_3,B_3,C_3 \text{)} 
  + \text{(torsion } \nu \text{)}
\end{displaymath}
A direct computer check shows that for any torsion $\nu$ the divisor
$K_X+F+\nu$ is in $\cS$ for a single curve $F\in \{
A_0,B_0,C_0,A_3,B_3,C_3 \}$. (In fact, for any $\nu\ne0$ the divisor
$K_X+\nu$ is already in $\cS$.)  Thus, 
\begin{displaymath}
  D - \text{(a nonnegative combination of }
  A_0,B_0,C_0,A_3,B_3,C_3  
  \text{)}
  \in \cS 
  \implies D\in \cS.
\end{displaymath}

\smallskip

\emph{Step 4: The statement is true for nef divisors in minimal form
  of degree $d\ge 7$ which \emph{are} the exceptions listed in
  Lemma~\ref{lem:exceptions}.}

We claim that any such divisor is in $\cS$, and in particular is
effective. For $d=7,8$ this is again a direct computer check. For
$d\ge 9$, the claim is true by induction, as follows:
If $D$ is of exceptional type (1,2, or 3) of Lemma~~\ref{lem:exceptions}
then $D-C_1$ has degree $d'=d-2$ and is of the same exceptional
type. This concludes the proof.
\end{proof}

\begin{remark}
  Note that we proved a little more than what
  Theorem~\ref{thm:eff-divisors} says. We also proved that every
  divisor $D$ in minimal form and of degree $\ge7$ is effective and is
  in the semigroup $\cS$.
\end{remark}

\begin{remark}
  For Burniat surfaces with $2\le K^2\le 5$, a natural question to ask
  is whether the semigroup of effective $\bZ$-divisors is generated by
  the preimages of the $(-1)$- and $(-2)$ curves on $Y'$. These
  include the divisors $A'_i,B'_i,C'_i$ and $E_s$ but in some cases
  there are other curves, too.
\end{remark}

\section{Exceptional collections on degenerate Burniat surfaces}
\label{sec:degs}

Degenerations of Burniat surfaces with $K_X^2=6$ were described in
\cite{alexeev2009explicit-compactifications}. Here, we will
concentrate on one particularly nice degeneration depicted in
Figure~\ref{fig:degn}. 

\begin{figure}[h]
  \centering
  \includegraphics[height=1.5in]{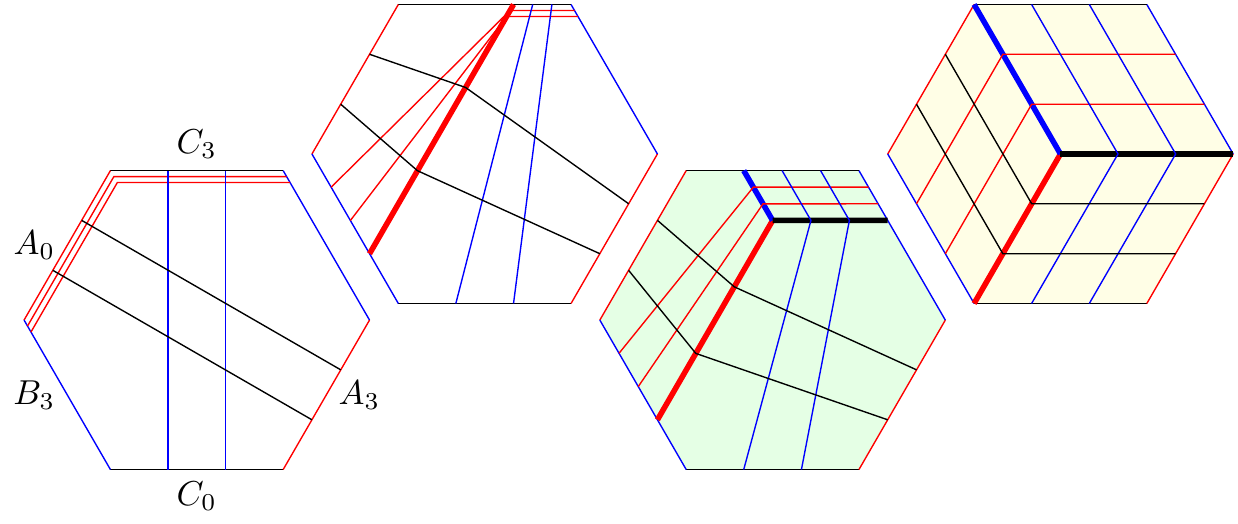}
  \caption{One-parameter degeneration of Burniat surfaces}
  \label{fig:degn} 
\end{figure}

It is described as follows. One begins with a one-parameter family
$f\colon (Y\times\bA^1,\sum_{i=0}^3 \bar A_i+\bar B_i+\bar C_i) \to \bA^1$ of del Pezzo
surfaces, in which the curves degenerate in the
central fiber $f\inv(0)$ to a configuration shown in the left
panel. The surface $\cY$ is obtained from $Y\times\bA^1$ by two blowups in the
central fiber, along the smooth centers $\bar A_0$ and then (the
strict preimage of) $\bar C_3$. The resulting 3-fold $\cY$ is smooth,
the central fiber $\cY_0 = \Bl_3\bP^2\cup \Bl_2\bP^2\cup
(\bP^1\times\bP^1)$ is reduced and has normal crossings. This central
fiber is shown in the third panel.

The log canonical divisor $K_{\cY}+ \frac12\sum_{i=0}^3 (\bar A_i+\bar
B_i+\bar C_i)$ is relatively big and nef over $\bA^1$. It is a
relatively minimal model. The relative canonical model $\cY\ucan$ is
obtained from $\cY$ by contracting three curves. The 3-fold $\cY\ucan$
is singular at three points and not $\bQ$-factorial. Its central fiber
$\cY\ucan_0$ is shown in the last, fourth panel.

The 3-folds $\pi\colon \cX\to \cY$ and $\pi\ucan\colon \cX\ucan\to\cY\ucan$
are the corresponding $\bZ_2^2$-Galois covers. The 3-fold $\cX$ is
smooth, and its central fiber $\cX_0$ is reduced and has normal
crossings. It is a relatively minimal model: $K_{\cX}$ is relatively
big and nef. 

The 3-fold $\cX\ucan$ is obtained from $\cX$ by contracting three
curves. Its canonical divisor $K_{\cX\ucan}$ is relatively ample. It
is a relative canonical model. We note that $\cX$ is one of the 6
relative minimal models $\cX^{(k)}$, $k=1,\dotsc,6$, that are related
by flops.  

Let $U\subset \bA^1$ be the open subset containing $0$ and all $t$ for
which the fiber $\cX_t$ is smooth, and let $\cX_U = \cX\times_{\bA^1}U$.
The aim of this section is to prove the following:

\begin{theorem}\label{thm:exc-colln-singular}
  Then there exists a sequence of line bundles $\cL_1,\dotsc,\cL_6$ on
  $\cX_U$ whose restrictions to any fiber (including the nonnormal
  semistable fiber $\cX_0$) form and exceptional collection of line
  bundles. 
\end{theorem}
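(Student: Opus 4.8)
The plan is to produce the six line bundles $\cL_i$ on the total space $\cX_U$ by spreading out an exceptional collection that exists on the generic (smooth Burniat) fiber, and then to verify that the restriction to the special fiber $\cX_0$ remains exceptional by a semicontinuity argument combined with an explicit check on $\cX_0$. First I would recall from \cite{alexeev2012derived-categories} the exceptional collection $\cE_1,\dotsc,\cE_6$ of line bundles on a smooth Burniat surface $X$ with $K^2=6$; its existence rests on the explicit description of $\Pic X$ in Theorem~\ref{thm:PicX} together with the semigroup description in Theorem~\ref{thm:eff-divisors}. The key point is that exceptionality and semiorthogonality are governed by vanishing of cohomology groups $H^\bullet(X,\cL)$ for the various differences $\cE_j\otimes\cE_i^{-1}$, and these differences are explicit divisor classes whose non-effectivity (hence vanishing of $H^0$) and vanishing of $H^1,H^2$ we can read off from the results of the preceding sections.

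Second, I would lift these classes to line bundles on $\cX_U$. The mechanism is that the relative Picard group of $\cX_U\to U$ is locally constant (the fibers have $H^1(\cO)=0$), so each divisor class $\cE_i$ on the generic fiber extends uniquely, after possibly shrinking $U$, to a line bundle $\cL_i$ on $\cX_U$ whose restriction to every smooth fiber $\cX_t$ is the corresponding exceptional object. Concretely, since the curves $A_i,B_i,C_i$ (and the exceptional curves of the blowups) exist as divisors on the total space $\cX$, one writes each $\cL_i$ as an explicit $\bZ$-combination of these divisors on $\cX_U$; this makes the restriction to each fiber, including $\cX_0$, completely explicit.

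Third, and this is where the genuine work lies, I would verify that $\cL_1|_{\cX_0},\dotsc,\cL_6|_{\cX_0}$ is still an exceptional collection on the nonnormal semistable fiber $\cX_0$. For the smooth fibers exceptionality is inherited from the generic fiber by flatness and upper-semicontinuity of $h^i$. But $\cX_0$ is a normal crossings union of three components (coming from $\Bl_3\bP^2\cup\Bl_2\bP^2\cup(\bP^1\times\bP^1)$ downstairs, covered by $\bZ_2^2$), so semicontinuity alone does not suffice: the required cohomology vanishings must be checked directly. The plan is to compute $H^\bullet(\cX_0,\cM)$ for each relevant $\cM=\cL_j\otimes\cL_i^{-1}|_{\cX_0}$ via the Mayer--Vietoris / normal-crossings spectral sequence
\begin{displaymath}
0\to H^0(\cX_0,\cM)\to \bigoplus_v H^0(\cX_0^{(v)},\cM)\to \bigoplus_e H^0(\cX_0^{(e)},\cM)\to\cdots
\end{displaymath}
where $\cX_0^{(v)}$ are the components and $\cX_0^{(e)}$ their double curves. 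Each component is a rational or covered surface on which cohomology of line bundles is computable, and the gluing terms are cohomology on the double curves. The main obstacle is exactly this step: one must confirm that along the degeneration no new sections or higher cohomology appear on the singular fiber, i.e. that the collection does not become "non-exceptional" at $t=0$.

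I expect to handle this either by a direct component-by-component computation using the explicit divisor expressions, or, more cleanly, by showing that the relevant $h^i(\cX_t,\cM_t)$ are constant in $t$ across all of $U$ (so that $\chi$ plus semicontinuity forces the needed vanishings on $\cX_0$ from their known values on the smooth fibers). The constancy of $\chi(\cM_t)$ is automatic by flatness, so the real content is ruling out jumps in $h^0$ and $h^2$ on $\cX_0$; the non-effectivity of the relevant classes, established in the spirit of Theorem~\ref{thm:eff-divisors} and Lemma~\ref{lem:exceptions}, is what drives this. Once all pairwise Hom-vanishings and the self-Hom computation $\Hom^\bullet(\cL_i|_{\cX_0},\cL_i|_{\cX_0})=\bC$ are verified, the collection is exceptional on every fiber, proving the theorem.
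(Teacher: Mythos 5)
Your overall architecture coincides with the paper's: the $\cL_i$ are defined by explicit $\bZ$-combinations of the divisors $\cA_i,\cB_i,\cC_i$, which exist on the total space $\cX$, so no spreading-out argument is needed; $\chi(\cL_i|_{\cX_0}\otimes\cL_j|_{\cX_0}\inv)=0$ comes for free from flatness; and the entire content of the theorem is the direct verification of the $h^0$-vanishings (for the differences and, via Serre duality on the Gorenstein surface $\cX_0$, for $K_{\cX_0}$ minus the differences) on the nonnormal fiber. But at exactly that point your proposal stops: you correctly identify the obstacle (``semicontinuity alone does not suffice'') and then offer two routes, neither of which closes it. The ``cleaner'' route is circular: cohomology is \emph{upper} semicontinuous, so vanishing on nearby smooth fibers together with constancy of $\chi$ does not rule out $h^0$ and $h^1$ (or $h^2$) jumping up simultaneously at $t=0$; constancy of the individual $h^i$ is what you must prove, not something semicontinuity hands you. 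And the phrase ``non-effectivity of the relevant classes, established in the spirit of Theorem~\ref{thm:eff-divisors} and Lemma~\ref{lem:exceptions}'' is precisely the missing content: those results live on the smooth Burniat surface and the del Pezzo surface, and say nothing about $\cX_0$, where the boundary divisors become reducible ($C_3=C_3'+C_3''$, spanning two components, with $(C_3')^2=0$ and $(C_3'')^2=-1$ on the respective components) and the base-locus arguments from the smooth case do not apply as stated.

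What the paper actually does to close this gap is prove nonnormal analogues of the subtraction lemmas used in [AO12]: Lemma~\ref{lem:subtract1} (if $D.A_i<0$ then $D-A_i$ is still effective on $\cX_0$) and Lemma~\ref{lem:subtract2} (if $D.A_i=0$ but $\phi_{A_i}(D)\ne0$ then $D-A_i$ is effective), whose proofs require a careful case analysis of how an effective Cartier divisor meets a reducible curve like $C_3'+C_3''$ across two components, together with Lemma~\ref{lem:AO-4.5}, the analogue of the key non-effectivity statement for the class $(3;\,1\,10,\,1\,10,\,1\,10)$. With these in hand, the vanishing argument from the smooth case runs verbatim: subtract curves until the degree against the nef divisor $K_{\cX_0}$ becomes negative. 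Your Mayer--Vietoris alternative (computing $H^\bullet$ on each of the three bielliptic components and on the double curves, and controlling the connecting maps) is viable in principle but is a genuinely different and far more laborious route, and it is only sketched, not carried out. As it stands, the proposal is a correct plan whose decisive step --- the mechanism for proving $h^0=0$ on the nonnormal surface --- is missing.
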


\begin{remark}
  It seems to be considerably harder to construct an exceptional
  collection on the surface $\cX\ucan_0$, the special fiber in a
  singular 3-fold $\cX\ucan$. And perhaps looking for one is not the
  right thing to do. A familiar result is that different smooth
  minimal models $\cX^{(k)}$ related by flops have equivalent derived
  categories. In the same vein, in our situation the central fibers
  $\cX^{(k)}_0$, which are reduced reducible semistable varieties,
  should have the same derived categories. The collection we construct
  works the same way for any of them.
\end{remark}

\begin{notation}
  On the surface $\cX_0$, we have 12 Cartier divisors
  $A_i,B_i,C_i$, $i=0,1,2,3$. The ``internal'' divisors $A_i,B_i,C_i$,
  $i=1,2$ have two irreducible components each. Of the 6 ``boundary''
  divisors, $A_0,A_3,C_0$ are irreducible, and $B_0=B_0'+B_0''$,
  $B_3=B_3'+B_3''$, $C_3=C_3'+C_3''$ are reducible. 

  Our notation for the latter divisors is as follows: the curve $C_3'$
  is a smooth elliptic curve (on the bottom surface $(\cY)_0$ the
  corresponding curve has 4 ramification points), and the curve $C_3''$ is
  isomorphic to $\bP^1$ (on the bottom surface the corresponding curve
  has 2 ramification points). 

  For consistency of notation, we also set $A_0'=A_0$, $A_3'=A_3$,
  $C_0'=C_0$. 
\end{notation}

\begin{definition}
  Let $\psi=\psi_{C_3}\colon C_3\to C_3'$ be the projection which is an
  isomorphism on the component $C_3'$ and collapses the component
  $C_3''$ to a point. 

  We have natural norm map $\psi_*=(\psi_{C_3})_*\colon \Pic C_3\to \Pic
  C_3'$. Indeed, every line bundle on the reducible curve $C_3$ can be
  represented as a Cartier divisor $\cO_{C_3}(\sum n_iP_i)$, where
  $P_i$ are nonsingular points. Then we define 
  \begin{displaymath}
    \psi_* \big( \cO_{C_3}(\sum n_iP_i) \big ) =
    \cO_{C_3'}( \sum n_i \psi(P_i) ).
  \end{displaymath}
  Since the dual graph of the curve $C_3$ is a tree, one has
  $\Pic^0 C_3 =\Pic^0 C_3'$ and $\Pic C_3 = \Pic^0C_3' \oplus \bZ^2$. 

  We also have similar morphisms $\psi_{B_0}$,
  $\psi_{B_3}$ and norm maps for the other two reducible curves. 
\end{definition}

\begin{definition}
  We define a map $\phi_{C_3}\colon \Pic \cX_0 \to \Pic C_3'$
  as the composition of the restriction to $C_3$ and the norm map
  $\psi_*\colon C_3\to C_3'$.   We also have similar morphisms $\phi_{B_0}$,
  $\phi_{B_3}$ for the other two reducible curves. For the irreducible
  curves $A_0,A_3,C_0$ the corresponding maps are simply the restriction
  maps on Picard groups. 
\end{definition}

For the following Lemma, compare Theorem~\ref{thm:PicX} above. 

\begin{lemma}\label{lem:restrictions}
  Consider the map
  \begin{displaymath}
    \phi_0\colon 
    \Pic \cX_0\to \bZ\oplus \Pic A_0' \oplus \Pic B_0' \oplus \Pic C_0'
  \end{displaymath}
  defined as $D\mapsto \deg (DK_{\cX_0})$ in the first component and 
  the maps $\phi_{A_0}$, $\phi_{B_0}$, $\phi_{C_0}$ in the other
  components. Then the images of the Cartier divisors $A_i,B_i,C_i$,
  $i=0,1,2,3$ are exactly the same as for a smooth Burniat surface
  $\cX_t$, $t\ne0$. 
\end{lemma}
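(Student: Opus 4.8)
The plan is to prove the statement by a specialization argument in the family $\cX_U\to U$, showing that each of the three ingredients of $\phi_0$ on the central fiber $\cX_0$ is the flat limit of the corresponding ingredient of the map $\phi$ from Theorem~\ref{thm:PicX} on a nearby smooth fiber $\cX_t$. Since the twelve divisors $A_i,B_i,C_i$ are Cartier on the total space $\cX$ and restrict compatibly to every fiber, their images under $\phi_0$ must be the specializations of their images under $\phi$ on $\cX_t$; once each component is identified with such a limit, the values on $\cX_0$ and on $\cX_t$ necessarily agree, and the right-hand sides are exactly those recorded in Theorem~\ref{thm:PicX}.

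First I would treat the degree component $D\mapsto \deg(D\,K_{\cX_0})$. Because $\cX_U\to U$ is a semistable family, relative adjunction gives $\omega_{\cX_0}=\omega_{\cX/U}|_{\cX_0}$, and $\omega_{\cX/U}$ restricts to $K_{\cX_t}$ on every smooth fiber. Intersection numbers of a Cartier divisor on the total space against the relative dualizing sheaf are locally constant in a flat family, so $D\cdot K_{\cX_0}=D_t\cdot K_{\cX_t}$; for the reducible surface $\cX_0$ the left-hand side is the sum over the components of $\cX_0$, and it equals the common value on the smooth fibers. Hence the first coordinate agrees with the value dictated by Theorem~\ref{thm:PicX}.

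Next come the three elliptic components. For the irreducible boundary curves $A_0'=A_0$ and $C_0'=C_0$ the maps $\phi_{A_0},\phi_{C_0}$ are plain restriction, and since these elliptic curves are the smooth limits of the corresponding elliptic curves on $\cX_t$, the restricted classes are literally the limits of $L_t|_{A_0}$ and $L_t|_{C_0}$; the computation reduces to reading off the intersection of each $A_i,B_i,C_i$ with $A_0$ (resp.\ $C_0$) together with the $2$-torsion and origin bookkeeping, exactly as on $\cX_t$. The genuinely new point is the reducible curve $B_0=B_0'+B_0''$ (and, by the same token, $B_3$ and $C_3$ in the analogous maps $\phi_{B_3},\phi_{C_3}$). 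Here $\phi_{B_0}=\psi_*\circ(\,\cdot\,|_{B_0})$, and the content is that the norm map $\psi_*$ computes the flat limit of restriction. Because the dual graph of $B_0$ is a tree, one has $\Pic^0 B_0=\Pic^0 B_0'$, so the relative Jacobian of the family of curves $B\to U$ (smooth elliptic generic fiber, reducible special fiber) has \emph{good reduction}: no toric part appears, and the identification $\Pic^0 B_{0,t}\xrightarrow{\sim}\Pic^0 B_0'$ is canonical. Under this identification the limit of the restriction $L_t|_{B_{0,t}}$ is precisely $\psi_*(L_0|_{B_0})$, since in the semistable model the rational component $B_0''$ is contracted and its degree is transferred to the node, which is exactly the rule defining $\psi_*$.

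The step I expect to be the main obstacle is this last identification of the norm map with the limit of restriction. Making it rigorous requires working with the relative Picard scheme $\mathrm{Pic}_{B/U}$ (or the N\'eron model of the Jacobian) and verifying that the tree condition forces good reduction, so that a section of $\mathrm{Pic}_{B/U}$ arising from a Cartier divisor on $\cX$ specializes according to $\psi_*$. A more hands-on alternative, which avoids the N\'eron-model machinery entirely, is to verify the conclusion by a direct computation: list the twelve images on $\cX_0$ by intersecting each divisor with $A_0',B_0',C_0'$ and applying $\psi_*$ where needed, and match the resulting tuples against the coordinates of Theorem~\ref{thm:PicX}. I would carry out the conceptual argument for the structural statement and use the direct check to confirm the individual entries.
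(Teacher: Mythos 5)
Your proposal is correct, but it takes a considerably heavier route than the paper, which disposes of this lemma with the words ``Immediate check'': since the statement concerns only the twelve explicit Cartier divisors $A_i,B_i,C_i$, one simply reads off, from the configuration on the central fiber, the degree against $K_{\cX_0}$, the restrictions to $A_0',B_0',C_0'$, and the effect of $\psi_*$ on points lying on the contracted components, and compares with the coordinates of Theorem~\ref{thm:PicX}. Your specialization argument is sound: intersection numbers against $\omega_{\cX/U}$ are constant in the flat family; for the irreducible elliptic curves the relative Picard scheme is an abelian scheme and restriction commutes with specialization; and the key point you single out for the reducible curves is genuine and does check out --- the norm map $\psi_*$ is invariant under twisting by the component $B_0''$ (such a twist has multidegree $(1,-1)$ on $B_0'\cup B_0''$, and $\psi_*$ sends it to $\cO_{B_0'}(P-P)=\cO_{B_0'}$), hence it factors through the separated quotient of the relative Picard functor and computes the N\'eron-model specialization, with the tree condition guaranteeing good reduction. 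What your approach buys is a conceptual explanation of why $\psi_*$ is the ``right'' definition, and in fact a stronger statement: $\phi_0$ composed with restriction to $\cX_0$ agrees with $\phi_t$ composed with restriction to $\cX_t$ for \emph{every} line bundle on $\cX_U$, not just the twelve divisors --- which would also streamline the proof of Theorem~\ref{thm:exc-colln-singular}. What it does not remove is the bookkeeping: to conclude ``exactly the same'' elements of $\bZ.P_{00}+F[2]$ you must still verify that the marked points (origins and $2$-torsion) on the smooth fibers specialize to the correspondingly labeled points of $A_0',B_0',C_0'$, and that intersection points landing on $B_0''$, $B_3''$, $C_3''$ go to the node; that verification is exactly the paper's ``immediate check,'' so your concluding fallback coincides with the paper's entire proof, and by itself it already suffices.
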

\begin{proof}
  Immediate check.
\end{proof}

\begin{definition}
  We will denote this image by $\im\phi_0$. One has $\im\phi_0\simeq
  \bZ^4\oplus \bZ_2^6$. We emphasize that $\im\phi_0=\im\phi_t=\Pic
  \cX_t$, where $\cX_t$ is a smooth Burniat surface. 

\end{definition}

\begin{lemma}\label{lem:subtract1}
  Let $D$ be an effective Cartier divisor $D$ on the surface
  $\cX_0$. Suppose that $D.A_i<0$ for $i=0,3$. Then the Cartier
  divisor $D-A_i$ is also effective. (Similarly for $B_i,C_i$.)
\end{lemma}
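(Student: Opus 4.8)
The plan is to run the standard base-locus argument: if an effective divisor $D$ pairs negatively with a curve $F$, then $F$ must lie in the support of $D$, so $D-F$ is again effective. Since each of the divisors $A_i$ is Cartier, I would tensor the structure sequence of $A_i$ with $\cO_{\cX_0}(D)$ to obtain
\[
  0 \to \cO_{\cX_0}(D-A_i) \to \cO_{\cX_0}(D)
  \xrightarrow{\ \mathrm{res}\ } \cO_{\cX_0}(D)|_{A_i} \to 0,
\]
and pass to global sections. The divisor $D$ is cut out by a section $s_D\in H^0(\cO_{\cX_0}(D))$, whose image $s_D|_{A_i}$ lies in $H^0$ of a line bundle of degree $D\cdot A_i<0$ on $A_i$. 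If this restriction vanishes, then by exactness $s_D$ lifts to a section of $\cO_{\cX_0}(D-A_i)$, which is exactly the effectivity of $D-A_i$.

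For the irreducible boundary curves $A_0,A_3,C_0$ this is immediate: they are integral (smooth elliptic) curves, and a line bundle of negative degree on an integral projective curve has no nonzero global sections, so $s_D|_{A_i}=0$. This disposes of the cases $i=0,3$ (and $C_0$) verbatim.

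The real work is the reducible boundary curves $B_0=B_0'+B_0''$, $B_3$, and $C_3=C_3'+C_3''$, where $F$ is the union of an elliptic component and a rational component $\cong\bP^1$ meeting at a single node (recall the dual graph is a tree). The restriction sequence still applies because $B_0$ is Cartier, but negativity of the \emph{total} degree $D\cdot B_0$ no longer forces $H^0(\cO_{\cX_0}(D)|_{B_0})=0$: on a reducible curve a nonzero section can be supported on a single component. I would normalize at the node and write a section as a compatible pair $(s',s'')$ on the two components; negativity of $D\cdot B_0'$ forces $s'=0$ on the elliptic component, and then the forced vanishing of $s_D$ at the node, combined with the degree of $\cO_{\cX_0}(D)$ on the rational component, forces $s''=0$ as well, whence $s_D$ vanishes on all of $B_0$ and lifts.

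I expect the main obstacle to be precisely this reducible case, and more precisely the input $D\cdot B_0'<0$ rather than merely $D\cdot(B_0'+B_0'')<0$. This is where the Cartier hypothesis is essential and must be used: it forces $D$ to contain the two components $B_0'$ and $B_0''$ with matched multiplicities, so that a negative value of $D\cdot B_0$ cannot be ``hidden'' on the rational twig while the elliptic component stays out of $\operatorname{Supp}D$ (without this constraint the statement would fail). Making this precise amounts to reading the individual intersection numbers $D\cdot B_0'$ and $D\cdot B_0''$ off the explicit degeneration of Figure~\ref{fig:degn} and verifying that $s_D$ is forced to vanish on the whole of $B_0$; the irreducible cases require none of this.
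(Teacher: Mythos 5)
Your general framework---the restriction sequence and lifting $s_D$ once $s_D|_{A_i}=0$---is an equivalent reformulation of the paper's base-locus argument, and you correctly identify that the irreducible case is trivial and that the reducible curves, together with the Cartier hypothesis, are the crux. But the sketch you give for the reducible case runs in the wrong direction, and the step you defer (``making this precise amounts to reading the individual intersection numbers off the figure'') is precisely the content of the proof; it cannot be read off the figure, because $D\cdot C_3'$ and $D\cdot C_3''$ depend on the arbitrary divisor $D$ and are not determined by the hypothesis $D\cdot C_3<0$.

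Concretely: your case analysis starts from ``negativity of $D\cdot B_0'$ forces $s'=0$ on the elliptic component.'' This situation never occurs. The elliptic component (in the paper's notation, $C_3'$) has self-intersection $0$ on the component surface $X'$ of $\cX_0$ containing it, so for any effective $D$ one has $(D|_{X'}\cdot C_3')_{X'}\ge 0$; negativity of the total degree is therefore always forced onto the rational component $C_3''$, which has square $-1$ on its surface $X''$. The correct chain, which is the paper's, is: $(D|_{X''}\cdot C_3'')_{X''}<0$ forces $D|_{X''}\supseteq nC_3''$ for some $n>0$; the Cartier condition then does \emph{not} give ``matched multiplicities'' of $C_3'$ and $C_3''$ in $D$ as you assert, but only that $D|_{X'}$ and $D|_{X''}$ cut out the same divisor on the double curve $X'\cap X''$, hence $\operatorname{mult}_P(D|_{X'})\ge n$ at the node $P=C_3'\cap C_3''$; finally, if $C_3'\not\subseteq \operatorname{Supp} D$, this multiplicity gives $(D|_{X'}\cdot C_3')_{X'}\ge n$, while $(D|_{X''}\cdot C_3'')_{X''}\ge n(C_3'')^2=-n$, so $D\cdot C_3\ge 0$, contradicting the hypothesis. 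Your alternative propagation step (``vanishing of $s_D$ at the node, combined with the degree on the rational component, forces $s''=0$'') is also unjustified: a section of a line bundle of positive degree on $\bP^1$ can vanish at the node without vanishing identically, and nothing in your hypotheses bounds that degree. So, as written, the proposal is a plan whose missing step is the proof itself, and the indications given of how that step would go are incorrect.
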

\begin{proof}
  For an irreducible divisor this is immediate, so let us do it for
  the divisor $C_3=C_3'+C_3''$ which spans two irreducible components,
  say $X',X''$ of the surface $\cX_0=X'\cup X''\cup X'''$. 
  Let $D'=D|_{X'}$, $D''=D|_{X''}$, $D'''=D|_{X'''}$. Then
  \begin{displaymath}
    D.C_3= (D'.C_3')_{X'} + (D''.C_3'')_{X''},    
  \end{displaymath}
  where the right-hand intersections are computed on the smooth
  irreducible surfaces. One has $(C_3')^2_{X'}=0$ and
  $(C_3'')^2_{X''}=-1$. Therefore, $(D'.C_3')_{X'}\ge0$. Thus,
  $D.C_3<0$ implies that $(D''.C_3'')_{X''}<0$. Then $C_3''$ must be
  in the base locus of the linear system $|D''|$ on the smooth surface
  $X''$. Let $n>0$ be the multiplicity of $C_3''$ in $D''$. Then the
  divisor $D''-nC_3''$ is effective and does not contain $C_3''$. 

  By what we just proved, $D$ must contain
  $nC_3''$. Thus, it passes through the point $P=C_3'\cap C_3''$ and
  the multiplicity of the curve $(D')_{X'}$ at $P$ is $\ge n$, since $D$
  is a Cartier divisor. 
  Suppose that $D$ does not contain the curve $C_3'$. Then 
  $(D'.C_3')_{X'}\ge n$, and 
  \begin{displaymath}
    D.C_3 = (D'.C_3')_{X'}+(D''.C_3'')_{X''} \ge n + (-n) =0,
  \end{displaymath}
  which provides a contradiction. We conclude that $D$ contains $C_3'$
  as well, and so $D-C_3$ is effective. 
\end{proof}

\begin{lemma}\label{lem:subtract2}
  Let $D$ be an effective Cartier divisor $D$ on the surface
  $\cX_0$. Suppose that $D.A_i=0$ for $i=0,3$ but $\phi_{A_i}(D)\ne
  0$ in $\Pic A_i$.  Then the Cartier divisor $D-A_i$ is also
  effective.  (Similarly for $B_i,C_i$.)
\end{lemma}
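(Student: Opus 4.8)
The plan is to mirror the proof of Lemma~\ref{lem:subtract1}, treating the irreducible boundary curves $A_0,A_3,C_0$ and the reducible ones $B_0,B_3,C_3$ separately. In both cases the goal is to show that the hypotheses force $D$ to actually contain $A_i$ (or the relevant reducible curve), so that $D-A_i$ is effective.

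For an irreducible elliptic boundary curve, say $A_0$, I would argue cohomologically. Since $A_0$ is a Cartier divisor on $\cX_0$ contained in a single component, there is an exact sequence
\begin{equation*}
  0\to \cO_{\cX_0}(D-A_0)\to \cO_{\cX_0}(D)\to \cO_{A_0}(D)\to 0 .
\end{equation*}
Here $\cO_{A_0}(D)$ has degree $D\cdot A_0=0$, and by hypothesis its class $\phi_{A_0}(D)=D|_{A_0}$ is a nonzero element of $\Pic^0 A_0$. A nontrivial degree-$0$ line bundle on an elliptic curve has no sections, so $H^0(A_0,\cO_{A_0}(D))=0$. Hence the restriction $H^0(\cO_{\cX_0}(D))\to H^0(\cO_{A_0}(D))$ is the zero map, every section of $\cO(D)$ vanishes on $A_0$, and taking the section cutting out $D$ shows $D-A_0$ is effective. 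The same works for $A_3$ and $C_0$.

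For a reducible boundary curve, say $C_3=C_3'+C_3''$ with $C_3'$ elliptic in $X'$ and $C_3''\simeq\bP^1$ in $X''$, I would reuse the intersection-theoretic bookkeeping of Lemma~\ref{lem:subtract1}: writing $D=D'+D''+D'''$ on the components, one has $(D'.C_3')_{X'}\ge0$ because $(C_3')^2_{X'}=0$ and $C_3'$ is nef, while $(D'.C_3')_{X'}+(D''.C_3'')_{X''}=D\cdot C_3=0$. If $(D''.C_3'')_{X''}<0$ then, exactly as before, $C_3''$ lies in the base locus of $|D''|$ with some multiplicity $n>0$; the Cartier condition forces $D'$ to have multiplicity $\ge n$ at $P=C_3'\cap C_3''$, and if $D$ did not contain $C_3'$ the intersection $(D'.C_3')_{X'}$ would equal exactly $n$, concentrated at $P$, giving $D|_{C_3'}=nP$ and hence $\phi_{C_3}(D)=n[P]-n[P]=0$, contradicting the hypothesis. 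If instead $(D''.C_3'')_{X''}\ge0$ then both intersection numbers vanish; if $D$ contained neither component then $D|_{C_3}$ would be an effective divisor of multidegree $(0,0)$, i.e.\ $D$ would be disjoint from $C_3$ and $\phi_{C_3}(D)=0$, again a contradiction, while containing either component forces, via the same Cartier/multiplicity argument at $P$, containment of the other. In every case $D\ge C_3$, so $D-C_3$ is effective; $B_0$ and $B_3$ are handled identically.

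The main obstacle is the reducible case, specifically the precise bookkeeping with the norm map $\psi_*$: one must verify that $\phi_{C_3}(D)$ always has degree $0$ under the hypothesis $D\cdot C_3=0$, and that the ``marginal'' configurations in which $D$ meets $C_3$ with total intersection $0$ but does not contain it are exactly the ones with $\phi_{C_3}(D)=0$. Pinning down that $D|_{C_3'}=nP$ in the critical subcase --- so that the elliptic contribution to $\phi_{C_3}(D)$ cancels the contribution collapsed from $C_3''$ --- is the delicate step, and it is precisely what the hypothesis $\phi_{C_3}(D)\ne0$ is designed to rule out.
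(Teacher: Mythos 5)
Your proof is correct and follows essentially the same route as the paper's: the same decomposition $D=D'+D''+D'''$ with the intersection bookkeeping $(D'.C_3')_{X'}+(D''.C_3'')_{X''}=0$, the same Cartier-multiplicity argument at the node $P=C_3'\cap C_3''$, and the same norm-map cancellation $n[P]-n[P]=0$ ruling out the critical subcase. The only cosmetic differences are your explicit cohomological treatment of the irreducible curves $A_0,A_3,C_0$ (which the paper treats as immediate) and a slightly different but logically equivalent organization of the cases for the reducible curves.
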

\begin{proof}
  We use the same notations as in the proof of the previous lemma. 
  Since $D'$ is effective, one has $(D'.C_3')_{X'}\ge0$. 

  If $(D''.C_3'')_{X''}<0$ then, as in the above proof one must have
  $D''=nC_3''$ and $D'$ intersect $C_3'$ only at the unique point
  $P=C_3'\cap C_3''$ and $(D'.C_3')_{X'}=n$. But then
  $\phi_{C_3}(D)=0$ in $\Pic C_3'$, a contradiction.

  If $(D''.C_3'')_{X''}=0$ but $D''-nC_3''$ is effective for some
  $n>0$, the same argument gives $DC_3>0$, so an even easier contradiction.

  Finally, assume that $(D'.C_3')_{X'}=(D''.C_3'')_{X''}=0$ and $D''$
  does not contain $C_3''$.  By assumption, we have $D'C_3'=0$ but
  $D'|_{C_3'}\ne0$ in $\Pic C_3'$. This implies that $D'-C_3'$ is
  effective and that $D$ contains the point $P=C_3'\cap C_3''$. But
  then $(D''.C_3'')_{X''}>0$. Contradiction.
\end{proof}

The following lemma is the precise analogue of \cite[Lemma
5]{alexeev2012derived-categories} (Lemma~4.5 in 
  the arXiv version).

\begin{lemma}\label{lem:AO-4.5}
  Let $F\in \Pic \cX_0$ be an invertible sheaf such that
  \begin{displaymath}
    \im \phi_0(F) = (3; 1\ 10, 1\ 10, 1\ 10) 
    \in \bZ\oplus \Pic A_0 \oplus \Pic B_0 \oplus C_0
  \end{displaymath}
  Then $h^0(\cX_0, F)=0$. 
\end{lemma}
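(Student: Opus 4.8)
The plan is to argue by contradiction: suppose $h^0(\cX_0,F)>0$, so that $F=\cO_{\cX_0}(D)$ for some effective Cartier divisor $D$, and then to strip off boundary curves using Lemmas~\ref{lem:subtract1} and~\ref{lem:subtract2} until the degree $D\cdot K_{\cX_0}$ is forced to become negative. First I would read off the numerical data of $F$ from $\im\phi_0(F)=(3;1\,10,1\,10,1\,10)$. The restriction of $F$ to each of $A_0,B_0,C_0$ has degree $1$ (the coefficient of $P_{00}$), so $D\cdot A_0=D\cdot B_0=D\cdot C_0=1$, while $D\cdot K_{\cX_0}=3$. Using the identification $\im\phi_0=\Pic\cX_t$ of Lemma~\ref{lem:restrictions} together with Theorem~\ref{thm:PicX}, the class of $F$ is the degree-$3$ class with symmetric coordinates $(3;1,1,1;0,0,0)$, so that in addition $D\cdot A_3=D\cdot B_3=D\cdot C_3=0$, and from the configuration the three curves $A_3,B_3,C_3$ are pairwise disjoint.

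The main step is then to show that $A_3,B_3,C_3$ lie in the base locus of $|D|$. Here I would verify, by a direct computation of the restrictions of $F$, that $\phi_{A_3}(F)$, $\phi_{B_3}(F)$, $\phi_{C_3}(F)$ are \emph{nonzero} $2$-torsion classes; for the reducible curves $B_3=B_3'+B_3''$ and $C_3=C_3'+C_3''$ this is computed through the norm maps $\phi_{B_3},\phi_{C_3}$ rather than by ordinary restriction. Since the three curves are mutually disjoint, each restriction is unchanged by subtracting the others (e.g.\ $\phi_{B_3}(A_3)=0$ because $A_3\cap B_3=\emptyset$), so three successive applications of Lemma~\ref{lem:subtract2} show that $D_1:=D-A_3-B_3-C_3$ is again effective.

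To finish, I would use the configuration relations $A_0\cdot A_3=0$ and $A_0\cdot B_3=A_0\cdot C_3=1$ (coming from the points $P_{00}=A_0\cap B_3$ and $P_{10}=A_0\cap C_3$), which give $D_1\cdot A_0=1-0-1-1=-1<0$, and symmetrically $D_1\cdot B_0=D_1\cdot C_0=-1$. Three applications of Lemma~\ref{lem:subtract1} then show that $D_2:=D_1-A_0-B_0-C_0$ is effective. But each of the six curves $A_3,B_3,C_3,A_0,B_0,C_0$ has degree $1$ against $K_{\cX_0}$, so $D_2\cdot K_{\cX_0}=3-3-3=-3<0$, contradicting the fact that $K_{\cX_0}$ is nef and therefore pairs nonnegatively with every effective divisor. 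Hence $h^0(\cX_0,F)=0$.

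The step I expect to be the main obstacle is the verification that $\phi_{A_3}(F),\phi_{B_3}(F),\phi_{C_3}(F)$ are all nonzero: this is exactly what distinguishes the present class from, say, $K_{\cX_0}$ itself, and it is the input that allows the subtraction to begin. The bookkeeping is complicated by the nonnormality of $\cX_0$, since $B_3$ and $C_3$ are reducible and their restrictions must be read through the norm maps; controlling base loci in this setting is precisely what Lemmas~\ref{lem:subtract1} and~\ref{lem:subtract2} are designed to do, and everything else in the argument is routine intersection-number computation transported from the smooth fiber via Lemma~\ref{lem:restrictions}.
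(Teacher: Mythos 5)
Your overall plan --- forcing boundary curves into the base locus via Lemmas~\ref{lem:subtract1} and \ref{lem:subtract2} until the degree against the nef divisor $K_{\cX_0}$ becomes negative --- is the right general shape, and it is the shape of the paper's argument (which invokes the proof of \cite[Lemma 5]{alexeev2012derived-categories} together with those two lemmas). But the step you yourself single out as the crux is broken, in two distinct ways. First, on $\cX_0$ you cannot ``read off the class of $F$'': Lemma~\ref{lem:restrictions} identifies the subgroup $\im\phi_0$ with $\Pic\cX_t$, but $\phi_0$ is not injective on $\Pic\cX_0$ (the Picard group of this reducible surface is far larger than $\bZ^4\oplus\bZ_2^6$, and the paper never claims injectivity). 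The hypothesis therefore pins down only $F\cdot K_{\cX_0}=3$ and the restrictions of $F$ to $A_0,B_0,C_0$; the numbers $F\cdot A_3$, $F\cdot B_3$, $F\cdot C_3$ and the classes $\phi_{A_3}(F)$, $\phi_{B_3}(F)$, $\phi_{C_3}(F)$ are simply not determined by the given data, so the ``direct computation of the restrictions'' you propose has nothing to compute from. Any correct proof must run on the data the lemma actually supplies, which is exactly why the paper leans on the three corner points.

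Second --- and this would sink the argument even on a smooth Burniat surface, where $\phi$ \emph{is} injective by Theorem~\ref{thm:PicX} --- the claim that $\phi_{A_3}(F),\phi_{B_3}(F),\phi_{C_3}(F)$ are nonzero torsion classes is false. In $\Pic\cX_t$ the class $(3;\,1\,10,\,1\,10,\,1\,10)$ equals $[A_3+B_0+C_3+(B_1-B_2)]$ (check the degree and the three restrictions, using the fact that $K_X$ restricts to the origin $P_{00}$ on each of $A_0,B_0,C_0$). All five curves $A_3,B_0,C_3,B_1,B_2$ are disjoint from $B_3$, so $\phi_{B_3}(F)=0$, and the restrictions to $A_3$ and $C_3$ likewise cancel to zero; equivalently, under the symmetry exchanging the indices $0$ and $3$ this class is precisely the mirror of $(3;\,0\,00,\,0\,00,\,0\,00)$, the other class treated in \cite[Lemma 5]{alexeev2012derived-categories}. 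Consequently Lemma~\ref{lem:subtract2} can never be applied to $A_3,B_3,C_3$: its hypothesis $\phi_{A_i}(D)\ne 0$ fails, and your induction cannot start. The mechanism that actually puts these curves in the base locus is different: the hypothesis says $\phi_{A_0}(F)$ is the class of the single point $A_0\cap C_3$, so an effective $D\in|F|$ either contains $A_0$ or passes through that corner; since the corner is a smooth point of $\cX_0$ lying on $C_3$, and since one can show $D\cdot C_3=0$ (on $\cX_0$ this itself must be derived, e.g.\ from $K_{\cX_0}\equiv A_0+B_0+C_0+A_3+B_3+C_3$ and effectivity, not read off from the smooth fiber), this forces $C_3\subset D$. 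One then alternates Lemmas~\ref{lem:subtract1} and \ref{lem:subtract2} on the remaining boundary curves and reaches an effective divisor of degree $0$ with nontrivial restriction to one of the elliptic curves, a contradiction. This is the content of the paper's remark that the corners $A_0\cap C_3$, $B_0\cap A_3$, $C_0\cap B_3$ being smooth points of $\cX_0$ is the crucial point; your proposal never uses the corners at all.
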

\begin{proof}
  The proof of \cite[Lemma 5]{alexeev2012derived-categories}, used
  verbatim together with the above Lemmas~\ref{lem:subtract1},
  \ref{lem:subtract2} works. Crucially, the three ``corners'' $A_0\cap
  C_3$, $B_0\cap A_3$, $C_0\cap B_3$ are smooth points on $\cX_0$. 
\end{proof}

\begin{proof}[Proof of Thm.~\ref{thm:exc-colln-singular}] 
  We define the sheaves $\cL_1,\dotsc,\cL_6$ by the same linear
  combinations of the Cartier divisors $\cA_i,\cB_i,\cC_i$ as in the
  smooth case \cite[Rem.2]{alexeev2012derived-categories} (Remark 4.4
  in the arXiv version), namely:
  \begin{eqnarray*}
&    \cL_1=\cO_\cX(\cA_3+\cB_0+\cC_0+\cA_1-\cA_2),
&    \cL_2=\cO_\cX(\cA_0+\cB_3+\cC_3+\cC_2-\cA_1), \\
&    \cL_3=\cO_\cX(\cC_2+\cA_2-\cC_0-\cA_3), 
&    \cL_4=\cO_\cX(\cB_2+\cC_2-\cB_0-\cC_3), \\
&    \cL_5=\cO_\cX(\cA_2+\cB_2-\cA_0-\cB_3), 
&    \cL_6=\cO_\cX.
  \end{eqnarray*}

  By \cite{alexeev2012derived-categories}, for every $t\ne0$
  they restrict to the invertible sheaves
  $L_1,\dotsc, L_6 \in \im\phi_t = \Pic \cX_t$ on a smooth Burniat
  surface which form an
  exceptional sequence.
  By Lemma~\ref{lem:restrictions}, the images of $\cL_i|_{\cX_0} \in
  \Pic \cX_0$ under the map
  \begin{displaymath}
    \phi_0\colon \Pic\cX_0 \onto \im\phi_0= \im\phi_t = \Pic\cX_t, 
    \quad t\ne 0.
  \end{displaymath}
  are also $L_1,\dotsc,L_6$. We claim that $\cL_i|_{\cX_0}$ also form an
  exceptional collection.

  Indeed, the proof in \cite{alexeev2012derived-categories} of the
  fact that $L_1,\dotsc, L_6$ is an exceptional collection on a smooth
  Burniat surface $\cX_t$ ($t\ne0$) consists of showing that for
  $i<j$ one has
  \begin{enumerate}
  \item $\chi(L_i\otimes L_j\inv)=0$,
  \item $h^0(L_i\otimes L_j)=0$, and 
  \item $h^0(K_{\cX_t}\otimes L_i\inv\otimes L_j)=0$. 
  \end{enumerate}
  The properties (2) and (3) are checked by repeatedly applying (the
  analogues of) Lemmas~\ref{lem:subtract1}, \ref{lem:subtract2},
  \ref{lem:AO-4.5} until $D.K_{\cX_t}<0$ (in which case $D$ is
  obviously not effective).

  In our case, one has $\chi(\cX_0, \cL_i|_{\cX_0}\otimes
  \cL_j|_{\cX_0}\inv) = \chi(\cX_t,\cL_i|_{\cX_t}\otimes
  \cL_j|_{\cX_t})\inv=0$ by flatness.  Since we proved that
  Lemmas~\ref{lem:subtract1}, \ref{lem:subtract2}, \ref{lem:AO-4.5}
  hold for the surface $\cX_0$, and since the Cartier divisor
  $K_{\cX_0}$ is nef, the same exact proof for vanishing of $h^0$ goes
  through unchanged.
\end{proof}

\begin{remark}
  The semiorthogonal complement $\cA_t$ of the full triangulated
  category generated by the sheaves $\langle
  \cL_1,\dotsc,\cL_6\rangle|_{\cX_t}$ is the quite mysterious
  ``quasiphantom''. A viable way to understand it could be to
  understand the degenerate quasiphantom $\cA_0=\langle
  \cL_1,\dotsc,\cL_6\rangle|_{\cX_t}^\perp$ on the semistable
  degeneration $\cX_0$ first. The irreducible components of $\cX_0$
  are three bielliptic surfaces and they are glued nicely. Then one
  could try to understand $\cA_t$ as a deformation of $\cA_0$. 
\end{remark}

\bibliographystyle{amsalpha} \renewcommand{\MR}[1]{}
%\bibliography{va}

\def\cprime{$'$}
\providecommand{\bysame}{\leavevmode\hbox to3em{\hrulefill}\thinspace}
\providecommand{\MR}{\relax\ifhmode\unskip\space\fi MR }
% \MRhref is called by the amsart/book/proc definition of \MR.
\providecommand{\MRhref}[2]{%
  \href{http://www.ams.org/mathscinet-getitem?mr=#1}{#2}
}
\providecommand{\href}[2]{#2}

\end{document}